\newtheorem{thm}{Theorem}
\newtheorem{theorem}[thm]{Theorem}
\newtheorem{lemma}[thm]{Lemma}
\newtheorem{proposition}[thm]{Proposition}
\begin{document}

\title[]{Logarithmic Sobolev Inequalities for Mollified Compactly Supported Measures}

\author[]{David Zimmermann}
\address{Department of Mathematics\\
  University of California \\
  San Diego 92093}
\email{dszimmer@math.ucsd.edu}
\maketitle

\begin{abstract}
We show that the convolution of a compactly supported measure on $\mathbb{R}$ with a Gaussian measure satisfies a
logarithmic Sobolev inequality (LSI). We use this result to give a new proof of a classical result in random matrix theory that states that, under
certain hypotheses, the empirical law of eigenvalues of a sequence of random real symmetric matrices converges weakly in probability
to its mean. We then examine the optimal constants in the LSIs for the convolved measures in terms of the variance of the convolving Gaussian.
We conclude with partial results on the extension of our main theorem to higher dimensions.
\end{abstract}

\section{Introduction}

A probability measure $\mu$ on $\mathbb{R}^n$ is said to satisfy a logarithmic Sobolev inequality (LSI) with constant $c\in \mathbb{R}$ if
\begin{equation}\label{eq:lsi}
\mathrm{Ent}_\mu(g^2)\leq 2c\int|\nabla g|^2 d\mu
\end{equation}
for all smooth and sufficiently integrable functions $g:\mathbb{R}^n\rightarrow\mathbb{R}$, where $\mathrm{Ent}_\mu$, called the entropy functional, is defined as
$$
\mathrm{Ent}_\mu (f)\equiv \int f \log f \mbox{ }d\mu- \int f \mbox{ }d\mu \log\int f \mbox{ }d\mu
$$
for measurable $f\geq 0$. The smallest $c$ for which (\ref{eq:lsi}) holds is called the log Sobolev constant for $\mu$. We will restrict our attention
to measures on $\mathbb{R}$ until the end of the paper, in Section \ref{sec:higherdim}.

LSIs show up as an important tool in many areas of mathematics, such as geometry \cite{Ba94, Ba97, BH97, Da87, Da90, DS84, Le96},
probability \cite{BT06, DS96, GZ03}, and optimal transport \cite{Le01, Vi03}, as well as statistical physics \cite{Ya96, Ya97, Ze92}.
A 2003 paper of Ledoux \cite{Le03} uses LSI (in its equivalent form {\it hypercontractivity}, see \cite{Gr75}) to determine tail bounds for the largest eigenvalue of
a large symmetric random matrix with Gaussian entries. Another important application of LSI in probability is the {\it Herbst inequality} (see \cite{GR98}, p.301, Ex. 3.4):

\begin{theorem}\label{thm:herbst}
$\mathrm{(Herbst).}$  Let $\mu$ be a probability measure on $\mathbb{R}^n$ satisfying a LSI with constant $c$, and let $F:\mathbb{R}^n\rightarrow\mathbb{R}$
be Lipschitz. Then for all $\lambda\in\mathbb{R}$,
$$
\mu \left\{\left|F-\int F\hspace{1mm}d\mu\right|\geq \lambda \right\} \leq 2\exp\left(-\frac{\lambda^2}{2c||F||_{\mathrm{Lip}}^2}\right).
$$
\end{theorem}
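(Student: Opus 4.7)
The plan is to apply the LSI to the exponential function $g = e^{\lambda F/2}$ (the so-called Herbst argument). Since $F$ is only assumed Lipschitz, I would first approximate $F$ by a sequence of smooth functions $F_k$ (e.g.\ via convolution with a standard mollifier) satisfying $\|F_k\|_{\mathrm{Lip}} \le \|F\|_{\mathrm{Lip}}$ and $F_k \to F$ pointwise, prove the bound for $F_k$, and pass to the limit at the end. For the smooth $F_k$, one can justify plugging $g = e^{\lambda F_k/2}$ into (\ref{eq:lsi}) (with a cutoff if needed for integrability).

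With $g^2 = e^{\lambda F}$, the gradient bound $|\nabla F| \le \|F\|_{\mathrm{Lip}}$ gives $|\nabla g|^2 \le \tfrac{\lambda^2}{4}\|F\|_{\mathrm{Lip}}^2 e^{\lambda F}$, so the LSI becomes
\begin{equation*}
\mathrm{Ent}_\mu(e^{\lambda F}) \le \tfrac{c\lambda^2}{2}\|F\|_{\mathrm{Lip}}^2 \int e^{\lambda F}\,d\mu.
\end{equation*}
Setting $H(\lambda) = \int e^{\lambda F}\,d\mu$, the left side is $\lambda H'(\lambda) - H(\lambda)\log H(\lambda)$, and dividing by $\lambda^2 H(\lambda)$ yields the key identity
\begin{equation*}
\frac{d}{d\lambda}\!\left(\frac{\log H(\lambda)}{\lambda}\right) \;\le\; \frac{c\|F\|_{\mathrm{Lip}}^2}{2}.
\end{equation*}

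Next I would integrate this from $0$ to $\lambda > 0$, using that $\log H(\lambda)/\lambda \to \int F\,d\mu$ as $\lambda \to 0^+$ (by L'H\^opital, since $H(0)=1$ and $H'(0) = \int F\,d\mu$). This gives the subgaussian moment bound
\begin{equation*}
\int e^{\lambda (F - \int F\,d\mu)}\,d\mu \;\le\; \exp\!\left(\tfrac{c\lambda^2}{2}\|F\|_{\mathrm{Lip}}^2\right),
\end{equation*}
and Markov's inequality applied to $e^{\lambda(F - \int F\,d\mu)}$ followed by optimization in $\lambda > 0$ (choose $\lambda = t/(c\|F\|_{\mathrm{Lip}}^2)$) yields the one-sided tail bound $\mu\{F - \int F\,d\mu \ge t\} \le \exp(-t^2/(2c\|F\|_{\mathrm{Lip}}^2))$. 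Replacing $F$ by $-F$ handles the other tail, and the factor $2$ in the theorem comes from a union bound.

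The main obstacle is not the computation itself but the technical step of justifying that $g = e^{\lambda F/2}$ is an admissible test function in the LSI, together with the mollification-and-limit argument needed because $F$ is only Lipschitz; in particular one should verify that $H(\lambda)$ is finite (which follows a posteriori from the bound itself, but a priori requires truncation of $F$ or of $g$). Once these approximation issues are handled, the differential-inequality computation and the Chebyshev step are straightforward.
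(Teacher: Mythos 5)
Your proof is correct and is the standard Herbst argument; the paper does not supply its own proof of this theorem but cites it as an exercise in \cite{GR98}, and the argument sketched there is the same one you give. The only point worth reinforcing is the integrability issue you already flag: one should first run the differential-inequality argument with a bounded truncation $F_N = \max(\min(F,N),-N)$ (which has the same Lipschitz constant), obtain the uniform Gaussian moment bound, and then let $N\to\infty$ by monotone convergence, after which the Chernoff optimization and union bound go through exactly as you describe.
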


Because of the widespread utility of LSI, it is of great interest to know which measures satisfy a LSI, and for those that do, what
the optimal constants are. There are some known sufficient conditions on $\mu$ in order for $\mu$ to satisfy a LSI (for example, \cite{BE85, BL06, BL00, HS87}),
as well as some known necessary conditions (for example, Theorem \ref{thm:herbst} above implies that $\mu$ must have sub-Gaussian tails if it
satisfies a LSI). Surprisingly absent from the literature is the idea of approximation of arbitrary measures by measures that
satisfy a LSI; this will be the focus of this paper. We will approximate by using convolution with Gaussian measures.

Convolution of an arbitrary measure with a Gaussian does not necessarily yield a LSI; for example, consider the exponential distribution on $\mathbb{R}$:
$d\mu(t)=\exp(-t)\hspace{1mm}dt$, $t\geq 0$. The right tail is not sub-Gaussian; therefore $\mu$ does not satisfy a LSI. If we convolve $\mu$ with the standard Gaussian measure, then the right tail of the convolved measure has density $p$ given by
\begin{align*}
p(x)&=\int_{-\infty}^\infty\frac{1}{\sqrt{2\pi}}\exp\left(-\frac{y^2}{2}\right)\exp(-(x-y))\cdot\mathds{1}_{\{y>0\}}(x-y)dy\\
&=\exp\left(-x+\frac{1}{2}\right)\int_{-\infty}^x\frac{1}{\sqrt{2\pi}}\exp\left(-\frac{(y+1)^2}{2}\right)dy\\
&\geq\frac{1}{2}\exp\left(-x+\frac{1}{2}\right), \hspace{7mm}\mbox{for}\hspace{5mm}x\geq -1.
\end{align*}
Thus the convolved measure still has an exponential, hence not sub-Gaussian, right tail and therefore does not satisfy a LSI either. So this approximation
scheme does not work in general. However, if we restrict our attention to compactly supported measures, then convolution will yield a LSI:

\begin{theorem}\label{thm:z}
Let $\mu$ be a compactly supported probability measure on $\mathbb{R}$. Let $\gamma_\delta$ be the
Gaussian measure centered at $0$ with variance $\delta>0$, i.e., $d\gamma_\delta=\frac{1}{\sqrt{2\pi\delta}}\exp
(\frac{-t^2}{2\delta})dt$. Then $\mu*\gamma_\delta$ satisfies a LSI with constant $c$ for some $c=c(\delta)$.
\end{theorem}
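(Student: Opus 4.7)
My plan is to combine the Bakry-\'Emery curvature criterion with the Holley-Stroock bounded-perturbation lemma, applied to the density of $\nu := \mu * \gamma_\delta$. The density
\[
p(x) = \frac{1}{\sqrt{2\pi\delta}}\int e^{-(x-y)^2/(2\delta)}\,d\mu(y)
\]
is smooth and strictly positive, so $V := -\log p$ is $C^\infty$; differentiating twice under the integral and recognizing the resulting ratios in terms of the tilted probability measure $d\nu_x(y)\propto e^{-(x-y)^2/(2\delta)}\,d\mu(y)$ yields the key identity
\[
V''(x) \;=\; \frac{1}{\delta} \;-\; \frac{1}{\delta^2}\,\mathrm{Var}_{\nu_x}(y).
\]

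Assume $\mathrm{supp}(\mu)\subseteq[-R,R]$. The trivial bound $\mathrm{Var}_{\nu_x}(y)\le R^2$ then gives $V''(x)\ge 1/\delta - R^2/\delta^2$ everywhere, which is positive when $\delta>R^2$ and already finishes the proof by Bakry-\'Emery in that regime with constant $\delta^2/(\delta-R^2)$. The first real step, for general $\delta$, is to observe that $\mathrm{Var}_{\nu_x}(y)\to 0$ as $|x|\to\infty$, since the Gaussian weight $e^{-(x-y)^2/(2\delta)}$ concentrates $\nu_x$ near the extreme point of $\mathrm{supp}(\mu)$ closest to $x$; a quantitative Laplace-type estimate produces $M,\lambda>0$ (depending on $\delta$ and $\mu$) with $V''(x)\ge\lambda$ for all $|x|\ge M$.

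The second step converts this asymptotic strong convexity into a global LSI via a standard decomposition. One writes $V = V_1 + V_2$ with $V_1\in C^2(\mathbb{R})$ satisfying $V_1''\ge\lambda/2$ globally and $V_2$ bounded. Bakry-\'Emery then yields LSI with constant $2/\lambda$ for the reference measure $Z^{-1}e^{-V_1(x)}\,dx$, and Holley-Stroock transfers this to $\nu\propto e^{-V_2}\cdot e^{-V_1}\,dx$ with constant $(2/\lambda)\exp(\mathrm{osc}(V_2))$, completing the proof.

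The main obstacle I anticipate is executing the decomposition $V=V_1+V_2$ honestly. Because $V(x)\sim(x-y^*)^2/(2\delta)$ at $+\infty$ and $V(x)\sim(x-y_*)^2/(2\delta)$ at $-\infty$, with potentially distinct endpoints $y_*<y^*$ of $\mathrm{supp}(\mu)$, no single shifted parabola can serve as $V_1$ up to bounded error; moreover a compactly supported $V_2$ is ruled out, since $\int V_2''\,dx=0$ forces $V_2''$ to have large positive parts outside $[-M,M]$ and destroy the strong convexity of $V_1$ at the transition. The right construction takes $V_1$ equal to the appropriate shifted quadratic in each tail (plus subleading logarithmic corrections matching the Laplace asymptotics of $p$) and interpolates in the middle by a smooth convex piece of near-minimal curvature $\lambda/2$, with transition points chosen to match first derivatives; checking that $V-V_1$ is bounded across all three pieces, uniformly in the fine structure of $\mu$ near its endpoints, is the technical heart of the argument.
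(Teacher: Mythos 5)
Your approach is correct in substance and genuinely different from the paper's. The paper invokes the one-dimensional Bobkov--G\"otze criterion (Theorem~\ref{thm:bg}) and verifies finiteness of $D_0, D_1$ by computing the tail asymptotics of $p$, $\int_{-\infty}^x p$, and $\int_x^m 1/p$ via L'H\^opital; you instead use the identity
\[
V''(x) \;=\; \frac{1}{\delta} \;-\; \frac{1}{\delta^2}\,\mathrm{Var}_{\nu_x}(y),
\]
which is correct and is exactly the scalar version of the Hessian computation the paper carries out in Section~\ref{sec:higherdim} for large $\delta$. Since $y$ ranges over a compact set, $V''$ is bounded below everywhere by $1/\delta - R^2/\delta^2$, and since $\nu_x$ collapses to the Dirac mass at the nearer endpoint of $\mathrm{supp}(\mu)$ as $|x|\to\infty$ (a fact that is genuinely one-dimensional: in $\mathbb{R}^n$ with a flat face on the support boundary the covariance does \emph{not} vanish, so this route does not automatically resolve the open problem of Section~\ref{sec:higherdim}), $V''(x)\to 1/\delta$ at infinity. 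So $V''\ge\lambda>0$ for $|x|\ge M$. The Bakry--\'Emery/Holley--Stroock route from here is sound and, unlike the paper's proof, yields explicit constants in terms of $\lambda$, $M$, and $\mathrm{osc}(V_2)$.

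Your worry about the decomposition $V=V_1+V_2$, however, is misplaced, and the ``technical heart'' you describe---matching shifted parabolas plus Laplace-type logarithmic corrections in the tails---is the wrong construction and would indeed be delicate (those subleading corrections depend on the fine structure of $\mu$ near its endpoints and need not be logarithmic for general $\mu$). The right move is to never compare $V$ to a parabola at all: take $V_1=V$ identically outside a compact set and modify only on a compact set. Concretely, pick $\lambda'\in(0,\lambda)$ and $M'>M$, and choose a continuous, compactly supported $\phi''$ with $\phi''\ge(\lambda'-V'')_+$ on $[-M,M]$, $0\le\phi''\le\lambda-\lambda'$ on $[M,M']\cup[-M',-M]$, $\phi''=0$ for $|x|>M'$, and $\int\phi''=0$ (possible once $M'-M$ is large enough, since the positive mass can be spread thinly over the annulus where $V''\ge\lambda$ gives you slack of size $\lambda-\lambda'$). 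Set $V_1=V+\phi$, $V_2=-\phi$. Then $V_1''=V''+\phi''\ge\lambda'$ everywhere, $\phi'$ vanishes outside $[-M',M']$, hence $\phi$ is constant on each component of $\{|x|>M'\}$ and therefore bounded. Your assertion that ``a compactly supported $V_2$ is ruled out'' and that $\int V_2''=0$ forces ``large positive parts outside $[-M,M]$'' that ``destroy the strong convexity'' is incorrect: the required positive part of $V_2''=-\phi''$ lives on $[-M,M]$ (where $V''$ is small), not outside, and outside $[-M,M]$ one only needs $V_2''\ge -(\lambda-\lambda')$, which the construction above respects by design. With this correction the decomposition step is routine and your proof closes.
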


The main tool we use to prove Theorem \ref{thm:z} is the following theorem due to Bobkov and G\"{o}tze (see \cite{BG99}, p.25, Thm. 5.3):

\begin{theorem}\label{thm:bg}
$\mathrm{(Bobkov, G\ddot{o}tze).}$ Let $\mu$ be a Borel probability measure on $\mathbb{R}$ with distribution function
$F(x)=\mu((-\infty,x])$. Let $p$ be the density of the absolutely continuous part of $\mu$
with respect to Lebesgue measure, and let $m$ be a median of $\mu$. Let
\begin{align*}
D_0&=\sup_{x<m}\left(F(x)\cdot\log\frac{1}{F(x)}\cdot\int_x^m\frac{1}{p(t)}dt\right),\\
D_1&=\sup_{x>m}\left((1-F(x))\cdot\log\frac{1}{1-F(x)}\cdot\int_m^x\frac{1}{p(t)}dt\right),
\end{align*}
defining $D_0$ and $D_1$ to be zero if $\mu((-\infty,m))=0$ or  $\mu((m,\infty))=0$, respectively.
Then the log Sobolev constant $c$ for $\mu$ satisfies $\frac{1}{150}(D_0+D_1)\leq c\leq 468(D_0+D_1)$.
\end{theorem}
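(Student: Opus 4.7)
The plan is to reduce the LSI on $\mathbb{R}$ to a pair of one-sided Hardy-type inequalities on either side of the median $m$, and then characterize each via a Muckenhoupt-style criterion that produces $D_0$ or $D_1$. This is the standard route for integral characterizations of one-dimensional functional inequalities; the main work lies in obtaining the correct quantitative comparison with the stated universal constants.

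First I would establish a median-splitting lemma. Given a smooth $g$, decompose $g = u + v$ with $u = (g - g(m))\mathds{1}_{(-\infty, m]}$ and $v = (g - g(m))\mathds{1}_{[m, \infty)}$. Using that $\mu((-\infty,m]) \geq 1/2$, $\mu([m, \infty)) \geq 1/2$, and the subadditivity of entropy under such decompositions, $\mathrm{Ent}_\mu(g^2)$ can be bounded (up to a universal constant) by $\mathrm{Ent}_\mu(u^2) + \mathrm{Ent}_\mu(v^2)$ plus a remainder that is absorbed into $\int (g')^2\, d\mu$ because $g(m)$ is a near-optimal shift. This reduces the problem to proving a one-sided Dirichlet LSI on each half-line: for smooth $f$ supported in $[m, \infty)$ with $f(m) = 0$,
$$\mathrm{Ent}_\mu(f^2) \leq C_1 \int (f')^2 \, d\mu,$$
together with the symmetric inequality on $(-\infty, m]$.

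Second I would show that the Dirichlet LSI on $[m, \infty)$ is equivalent, up to universal constants, to $D_1 < \infty$. For the upper bound on $c$, apply Cauchy--Schwarz pointwise:
$$f(x)^2 = \left(\int_m^x f'(t)\, dt\right)^{\!2} \leq \left(\int_m^x \frac{dt}{p(t)}\right)\!\left(\int_m^x p(t) (f'(t))^2\, dt\right).$$
Combining this bound with a stratification of the entropy integral by the level sets of $1 - F$, the factor $\log(1/(1-F))$ emerges from the entropy via a standard Orlicz-type manipulation, and the product of the two factors at the worst level set is exactly $D_1$. For the lower bound on $c$, I would test the inequality against truncated functions of the form $f_y(x) = \int_m^{\min(x,y)} dt/p(t)$ for $y > m$ chosen near the supremum defining $D_1$; a direct computation shows the ratio of entropy to Dirichlet energy approaches a universal multiple of $D_1$.

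The main obstacle is obtaining the \emph{logarithmic} weight $\log(1/(1-F))$ in the upper bound. A naive Cauchy--Schwarz only produces the Poincar\'e--Hardy criterion $\sup_{x > m}(1-F(x)) \int_m^x dt/p(t)$, which controls a spectral gap rather than an LSI. Bridging this gap requires either an Orlicz-space argument with the N-function $\Phi(t) = t^2 \log(e + t^2)$, or an interpolation between the $L^2$-Hardy inequality and an $L^\infty$ bound via a Marcinkiewicz-type decomposition of the level sets of $f$. The explicit constants $1/150$ and $468$ are not individually illuminating; they arise from bookkeeping across the median-splitting reduction and the Hardy--Orlicz comparison.
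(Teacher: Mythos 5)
A point of order first: Theorem \ref{thm:bg} is not proved in this paper at all --- it is quoted from Bobkov and G\"otze \cite{BG99} (Theorem 5.3 there) and used as a black box --- so there is no internal proof to compare yours against. Judged on its own merits, your outline follows essentially the route of the original source: split across the median, reduce to a one-sided Dirichlet-type inequality for functions vanishing at $m$, characterize that by a Muckenhoupt-type quantity, and get the lower bound on $c$ from the test functions $f_y(x)=\int_m^{\min(x,y)}dt/p(t)$. The lower-bound computation is sound as sketched: $\int(f_y')^2\,d\mu=\int_m^y dt/p(t)$, and the dual formula $\mathrm{Ent}_\mu(f^2)=\sup\{\int f^2h\,d\mu:\int e^h\,d\mu\le1\}$ with $h$ a suitable multiple of $\mathds{1}_{[y,\infty)}$ produces the factor $(1-F(y))\log\frac{1}{1-F(y)}$, so $c\geq\kappa D_1$ for a universal $\kappa$.

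The genuine gap is exactly where you locate it, and locating an obstacle is not the same as overcoming it. Cauchy--Schwarz alone yields only the Poincar\'e criterion $\sup_{x>m}(1-F(x))\int_m^x dt/p(t)$; the entire content of the theorem is the upgrade of the weight $1-F$ to $(1-F)\log\frac{1}{1-F}$, and you defer this to ``either an Orlicz-space argument or an interpolation'' without executing either. In \cite{BG99} that step is the bulk of the proof and is where the constant $468$ is actually generated, so as written your text is a map of the proof rather than a proof. A secondary imprecision: in the median-splitting step the remainder is not ``absorbed into $\int(g')^2\,d\mu$ because $g(m)$ is a near-optimal shift.'' The correct device is the variational bound $\mathrm{Ent}_\mu(g^2)\leq\int\left[g^2\log\left(g^2/a\right)-g^2+a\right]d\mu$ valid for every $a>0$, applied with $a=g(m)^2$; the integrand is pointwise nonnegative, so the integral splits exactly into two one-sided functionals of $g-g(m)$ with no cross term to absorb, and each piece is then fed to the one-sided Hardy--Orlicz inequality. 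If you repair the splitting this way and actually prove the weighted Hardy inequality with logarithmic weight (with explicit constants), the argument closes.
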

In particular, $\mu$ satisfies a LSI if and only if $D_0+D_1<\infty$.\\

Theorem \ref{thm:z} enables us to give a new proof of the universality theorem in random matrix theory that the empirical law of
eigenvalues of an $n\times n$ real symmetric random matrix converges weakly to its mean in probability as $n\rightarrow\infty$;
this is detailed in Section \ref{sec:app}.

\section{Proof of Main Theorem}

The key idea to the proof of Theorem \ref{thm:z} using Theorem \ref{thm:bg} is the fact that we can describe the tail behavior of the
convolution of a compactly supported measure with a Gaussian.\\

\begin{proof}

Suppose $\mathrm{supp}(\mu)\subseteq [a,b]$. We will apply Theorem \ref{thm:bg} to the
probability measure $\mu*\gamma_\delta$. We will show $D_0$ and $D_1$, as defined in Theorem \ref{thm:bg},
are finite; at the moment we consider $D_0$. Since $\gamma_\delta$ has a smooth density, $\mu*\gamma_\delta$ has a smooth density $p$.
Note that $p$ is nonzero everywhere since $\gamma_\delta$ has strictly positive density. We therefore want to show
$$
D_0=\sup_{x<m}\left(\int_{-\infty}^xp(t)dt\cdot\log\frac{1}{\int_{-\infty}^xp(t)dt}\cdot\int_x^m\frac{1}{p(t)}dt\right)
$$
is finite. Since the above expression is continuous in $x$ for all $x\in\mathbb{R}$, it is bounded on
every compact interval. We therefore only need to show that
$$
\limsup_{x\rightarrow-\infty}
\left(\int_{-\infty}^xp(t)dt\cdot\log\frac{1}{\int_{-\infty}^xp(t)dt}\cdot\int_x^m\frac{1}{p(t)}dt\right)
$$
is finite. We will do this by giving asymptotics for $\int_{-\infty}^xp(t)dt$ and $\int_x^m\frac{1}{p(t)}dt$.

\begin{lemma}\label{lem:densityasymptotic}
$$
\lim_{x\rightarrow-\infty}\frac{\delta p'(x)}{-x p(x)}=1.
$$
\end{lemma}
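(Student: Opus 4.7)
The plan is to differentiate the convolution density under the integral sign and exploit the fact that $\mu$ is supported in the bounded interval $[a,b]$.

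Writing $\varphi_\delta(t) = \frac{1}{\sqrt{2\pi\delta}}\exp(-t^2/(2\delta))$ for the density of $\gamma_\delta$, we have
$$
p(x) = \int_a^b \varphi_\delta(x-y)\,d\mu(y),
$$
and since $\varphi_\delta'(t) = -(t/\delta)\varphi_\delta(t)$, differentiation under the integral (justified by the rapid decay of the Gaussian and compactness of $[a,b]$) yields
$$
\delta p'(x) = -\int_a^b (x-y)\,\varphi_\delta(x-y)\,d\mu(y) = -x\,p(x) + \int_a^b y\,\varphi_\delta(x-y)\,d\mu(y).
$$

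Dividing by $-x\,p(x)$ (recall $p$ is strictly positive everywhere), I get
$$
\frac{\delta p'(x)}{-x\,p(x)} \;=\; 1 \;-\; \frac{1}{x}\cdot\frac{\int_a^b y\,\varphi_\delta(x-y)\,d\mu(y)}{\int_a^b \varphi_\delta(x-y)\,d\mu(y)}.
$$
The fraction on the right is a weighted average of $y \in [a,b]$, so its absolute value is at most $\max(|a|,|b|)$. Therefore the second term is bounded in modulus by $\max(|a|,|b|)/|x|$, which tends to $0$ as $x\to-\infty$, and the desired limit is $1$.

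There is no real obstacle here: the only thing to check carefully is that differentiation under the integral sign is legitimate, which follows from a standard dominated convergence argument using the fact that $|(x-y)\varphi_\delta(x-y)|$ is uniformly integrable against $d\mu$ on any compact $x$-interval (with $y$ ranging over $[a,b]$). The compact support of $\mu$ is essential in two ways: it makes the weighted average of $y$ uniformly bounded in $x$, and it ensures that the identity $\delta p' = -xp + \int y\varphi_\delta(x-y)d\mu(y)$ is well defined with a controllable error term.
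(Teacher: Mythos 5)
Your proof is correct and takes essentially the same approach as the paper: differentiate under the integral, rewrite $\delta p'(x)/(-x p(x))$ as $1$ minus an error term, and bound that error by $\max(|a|,|b|)/|x|$ using the compact support of $\mu$. The only cosmetic difference is that you phrase the error term as a weighted average of $y\in[a,b]$, which is the same estimate the paper carries out directly.
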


\begin{proof}
By definition of $p$,
$$
p(x)=\int \frac{1}{\sqrt{2\pi\delta}}\exp\left(\frac{-(x-t)^2}{2\delta}\right)d\mu(t),
$$
so
$$
\frac{p(x+h)-p(x)}{h}
=\int \frac{1}{\sqrt{2\pi\delta}}\frac{1}{h}
\left(\exp\left(\frac{-(x+h-t)^2}{2\delta}\right)-\exp\left(\frac{-(x-t)^2}{2\delta}\right)\right)d\mu(t).
$$
Since, by the Mean Value Theorem, the integrand in the above equation is dominated uniformly in $h$ by
$\max_{t\in\mathbb{R}}\frac{-t}{\delta\sqrt{2\pi\delta}}\exp(\frac{-t^2}{2\delta})<\infty$,
we can let $h\rightarrow 0$ and apply the Dominated Convergence Theorem to differentiate under the
integral and get
$$
p'(x)=\int \frac{-1}{\delta\sqrt{2\pi\delta}}(x-t)\exp\left(\frac{-(x-t)^2}{2\delta}\right)d\mu(t).
$$
Then
\begin{align*}
\frac{\delta p'(x)}{-x p(x)} & =
\frac{\delta\int \frac{-1}{\delta\sqrt{2\pi\delta}}(x-t)\exp\left(\frac{-(x-t)^2}{2\delta}\right)d\mu(t)}
{-x\int \frac{1}{\sqrt{2\pi\delta}}\exp\left(\frac{-(x-t)^2}{2\delta}\right)d\mu(t)}\\
&= \frac{\int(x-t)\exp\left(\frac{-(x-t)^2}{2\delta}\right)d\mu(t)}
{x\int \exp\left(\frac{-(x-t)^2}{2\delta}\right)d\mu(t)}\\
&= 1-\frac{\int t\exp\left(\frac{-(x-t)^2}{2\delta}\right)d\mu(t)}
{x\int \exp\left(\frac{-(x-t)^2}{2\delta}\right)d\mu(t)}.
\end{align*}
But
\begin{align*}
\left|{\frac{\int t\exp\left(\frac{-(x-t)^2}{2\delta}\right)d\mu(t)}
{x\int \exp\left(\frac{-(x-t)^2}{2\delta}\right)d\mu(t)}}\right|
&\leq {\frac{\int |t|\exp\left(\frac{-(x-t)^2}{2\delta}\right)d\mu(t)}
{|x|\int \exp\left(\frac{-(x-t)^2}{2\delta}\right)d\mu(t)}}\\
&\leq {\frac{\max(|a|,|b|)\int \exp\left(\frac{-(x-t)^2}{2\delta}\right)d\mu(t)}
{|x|\int \exp\left(\frac{-(x-t)^2}{2\delta}\right)d\mu(t)}}\\
&=\frac{\max(|a|,|b|)}{|x|}\\
&\rightarrow 0 \hspace{3mm}\mbox{as} \hspace{3mm}x\rightarrow-\infty,
\end{align*}
so $\frac{\delta p'(x)}{-x p(x)}\rightarrow 1$ as $x\rightarrow -\infty$. \\
\end{proof}

The next two lemmas give asymptotics for $\int_{-\infty}^xp(t)dt$ and $\int_x^m\frac{1}{p(t)}dt$.
We will say $f(x)\sim g(x)$ if $\frac{f(x)}{g(x)}\rightarrow 1$ as $x\rightarrow -\infty$.

\begin{lemma}\label{lem:asymptotic1}
$$
\int_{-\infty}^xp(t)dt \sim -\frac{\delta}{x}p(x).
$$
\end{lemma}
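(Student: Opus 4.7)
The plan is to apply L'H\^opital's rule to the ratio
$$
R(x)=\frac{\int_{-\infty}^x p(t)\,dt}{\,-\delta p(x)/x\,},
$$
and then invoke Lemma \ref{lem:densityasymptotic} to evaluate the limit of the resulting quotient of derivatives.

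First I would verify the hypotheses. The numerator tends to $0$ as $x\to-\infty$ since $\mu*\gamma_\delta$ is a probability measure. For the denominator, note that since $\mathrm{supp}(\mu)\subseteq[a,b]$, for $x<a$ we have the pointwise bound $p(x)\leq \frac{1}{\sqrt{2\pi\delta}}\exp(-(x-b)^2/(2\delta))$, which decays super-polynomially, so $-\delta p(x)/x \to 0$ as well. Both functions are smooth in $x$, so L'H\^opital is applicable provided the denominator's derivative is eventually nonzero (which I verify below).

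Next I would compute the derivatives. The numerator differentiates to $p(x)$. For the denominator,
$$
\frac{d}{dx}\!\left(-\frac{\delta p(x)}{x}\right)=-\frac{\delta p'(x)}{x}+\frac{\delta p(x)}{x^2}=p(x)\!\left(\frac{\delta p'(x)}{-xp(x)}+\frac{\delta}{x^2}\right).
$$
By Lemma \ref{lem:densityasymptotic}, the bracketed factor tends to $1+0=1$ as $x\to-\infty$, which in particular shows the derivative of the denominator is nonzero for all sufficiently negative $x$. Dividing, the ratio of derivatives is
$$
\frac{p(x)}{p(x)\bigl(\delta p'(x)/(-xp(x))+\delta/x^2\bigr)}=\frac{1}{\delta p'(x)/(-xp(x))+\delta/x^2}\longrightarrow 1,
$$
and L'H\^opital's rule yields $R(x)\to 1$, which is exactly the claim.

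The only real obstacle is the routine verification that the hypotheses of L'H\^opital are met, in particular that the denominator tends to $0$ and that its derivative is eventually nonzero; both follow easily from the Gaussian-tail bound on $p$ and from Lemma \ref{lem:densityasymptotic}, respectively.
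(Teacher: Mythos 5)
Your proof is correct and follows essentially the same route as the paper: apply L'H\^opital's rule to the ratio $\int_{-\infty}^x p\,/\,(-\delta p(x)/x)$ and invoke Lemma \ref{lem:densityasymptotic} to evaluate the limit of the derivative quotient. The only difference is that you spell out the verification of L'H\^opital's hypotheses (denominator vanishing, derivative eventually nonzero) a bit more explicitly than the paper does.
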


\begin{proof}
Observe that both $\int_{-\infty}^xp(t)dt$ and $-\frac{\delta}{x}p(x)$
tend to $0$ as $x\rightarrow -\infty$ and apply L'H\^{o}pital's Rule and Lemma \ref{lem:densityasymptotic}:
\begin{align*}
\lim_{x\rightarrow -\infty}\frac{\int_{-\infty}^xp(t)dt}{-\frac{\delta}{x}p(x)}
&=\lim_{x\rightarrow -\infty}\frac{p(x)}{\frac{\delta}{x^2}p(x)-\frac{\delta}{x}p'(x)}\\
&=\lim_{x\rightarrow -\infty}\frac{1}{\frac{\delta}{x^2}+\frac{\delta p'(x)}{-x p(x)}}\\
&=1
\end{align*}
\end{proof}

\begin{lemma}\label{lem:asymptotic2}
$$
\int_x^m\frac{1}{p(t)}dt \sim -\frac{\delta}{x p(x)}.
$$
\end{lemma}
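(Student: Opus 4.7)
The plan is to imitate the proof of Lemma \ref{lem:asymptotic1}, now applying L'H\^{o}pital's Rule in the $\infty/\infty$ form, and then invoking Lemma \ref{lem:densityasymptotic} to evaluate the resulting ratio of derivatives.

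First I would verify that both $\int_x^m \frac{1}{p(t)}\,dt$ and $-\frac{\delta}{xp(x)}$ diverge to $+\infty$ as $x\to-\infty$. Since $\mathrm{supp}(\mu)\subseteq[a,b]$, for $x<a$ we have the elementary Gaussian bound $p(x)\le \frac{1}{\sqrt{2\pi\delta}}\exp(-(x-b)^2/(2\delta))$, so $1/p(t)$ grows faster than any polynomial, forcing the integral to diverge. The same bound forces $xp(x)\to 0^-$, so $-\delta/(xp(x))\to+\infty$.

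Next I would differentiate numerator and denominator. The numerator has derivative $-1/p(x)$, while a direct computation gives
\[
\frac{d}{dx}\left(-\frac{\delta}{xp(x)}\right)=\frac{\delta\bigl(p(x)+xp'(x)\bigr)}{(xp(x))^2}.
\]
Taking the ratio and simplifying yields
\[
\frac{-x^2}{\delta+\delta\, xp'(x)/p(x)}.
\]
By Lemma \ref{lem:densityasymptotic}, $\delta p'(x)/p(x)\sim -x$, hence $\delta\, xp'(x)/p(x)\sim -x^2$, which dominates the constant $\delta$; the ratio therefore tends to $1$.

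The one real subtlety I anticipate is the justification of L'H\^{o}pital in the $\infty/\infty$ case, which requires the denominator to be eventually monotone (or at least to have an eventually nonvanishing derivative). The sign of $\frac{d}{dx}\bigl(-\delta/(xp(x))\bigr)$ is the sign of $p(x)+xp'(x)=p(x)\bigl(1+xp'(x)/p(x)\bigr)$, and by Lemma \ref{lem:densityasymptotic} this factor is asymptotic to $p(x)(1-x^2/\delta)$, which is negative once $x^2>\delta$. Hence the denominator is eventually monotone, L'H\^{o}pital applies, and the proof is complete.
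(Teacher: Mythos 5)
Your proposal is correct and follows essentially the same route as the paper: verify that both quantities diverge to $+\infty$, apply L'H\^{o}pital's Rule in the $\infty/\infty$ form, and use Lemma \ref{lem:densityasymptotic} to evaluate the resulting ratio, which reduces to the same expression the paper obtains. Your additional check that the denominator's derivative is eventually nonvanishing is a point the paper leaves implicit.
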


Observe that this claim shows that the above integral asymptotically does not depend on $m$.
Since $p$ is continuous and nonzero on $\mathbb{R}$ and $m\in[a,b]$, $\int_x^m\frac{1}{p(t)}dt$
is finite for each $x$; and since $\frac{1}{p(t)}$ blows up as $x\rightarrow -\infty$, any dependence
on $m$ of $\int_x^m\frac{1}{p(t)}dt$ is diminished as $x\rightarrow -\infty$.

\begin{proof}
\begin{align*}
p(x)&=\int \frac{1}{\sqrt{2\pi\delta}}\exp\left(\frac{-(x-t)^2}{2\delta}\right)d\mu(t)\\
&\leq \int \frac{1}{\sqrt{2\pi\delta}}\exp\left(\frac{-(x-a)^2}{2\delta}\right)d\mu(t)\\
&=\frac{1}{\sqrt{2\pi\delta}}\exp\left(\frac{-(x-a)^2}{2\delta}\right),
\end{align*}
so that
$$
\frac{1}{p(x)}\geq\sqrt{2\pi\delta}\cdot\exp\left(\frac{(x-a)^2}{2\delta}\right).
$$
So $-\frac{\delta}{x p(x)}$ tends to $+\infty$ as $x\rightarrow-\infty$
and we can again use L'H\^{o}pital's Rule and Lemma \ref{lem:densityasymptotic}:
\begin{align*}
\lim_{x\rightarrow -\infty}\frac{\int_x^m \frac{1}{p(t)}dt}{-\frac{\delta}{xp(x)}}
&=\lim_{x\rightarrow -\infty}\frac{-\frac{1}{p(x)}}{\frac{\delta}{(xp(x))^2}(p(x)+xp'(x))}\\
&=\lim_{x\rightarrow -\infty}\frac{1}{\frac{-\delta}{x^2}+\frac{\delta p'(x)}{-x p(x)}}\\
&=1.
\end{align*}
\end{proof}

Before we proceed, we need the following fact about asymptotics of logs: if $f(x)\sim g(x)$ and
$g(x)\rightarrow\infty$ as $x\rightarrow-\infty$, then $\log f(x)\sim\log g(x)$. This follows by
observing that
$$
\frac{\log f(x)}{\log g(x)}=1+\frac{\log\left(\frac{f(x)}{g(x)}\right)}{\log g(x)}
$$
and letting $x\rightarrow-\infty$.\\

\begin{proposition}\label{prop:D0D1}
$D_0$ and $D_1$ are finite.
\end{proposition}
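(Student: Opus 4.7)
The plan is to combine the three asymptotics just established to evaluate the limiting behavior as $x\to-\infty$ of the quantity whose supremum defines $D_0$, and to check that this limit (or limsup) is finite. Since the integrand in $D_0$ is continuous in $x$ on $(-\infty, m)$ and the continuation of the expression to $x = m$ is zero, finiteness on compact intervals is automatic, so only the tail matters; $D_1$ will follow by an identical argument at $+\infty$ with $a$ replaced by $b$.

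First I would use Lemma \ref{lem:asymptotic1} to get $\int_{-\infty}^x p(t)\,dt \sim -\tfrac{\delta}{x} p(x)$. Since the upper bound $p(x) \leq \tfrac{1}{\sqrt{2\pi\delta}}\exp\left(\tfrac{-(x-a)^2}{2\delta}\right)$ from the proof of Lemma \ref{lem:asymptotic2} forces this quantity to tend to $0$, the reciprocal tends to $\infty$, so I can apply the logarithm fact noted just before the proposition: $\log \tfrac{1}{\int_{-\infty}^x p(t)\,dt} \sim \log \tfrac{-x}{\delta p(x)}$. Combining this with Lemma \ref{lem:asymptotic2} and multiplying the three equivalents together (standard for $\sim$) gives
$$
\int_{-\infty}^x p(t)\,dt \cdot \log\tfrac{1}{\int_{-\infty}^x p(t)\,dt} \cdot \int_x^m \tfrac{1}{p(t)}\,dt \;\sim\; \frac{\delta^2}{x^2}\log\frac{-x}{\delta\, p(x)}.
$$

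Next I would bound the right-hand side above. Using $p(x) \leq \tfrac{1}{\sqrt{2\pi\delta}}\exp\left(\tfrac{-(x-a)^2}{2\delta}\right)$ once more yields $-\log p(x) \leq \tfrac{(x-a)^2}{2\delta} + \tfrac{1}{2}\log(2\pi\delta)$, hence
$$
\frac{\delta^2}{x^2}\log\frac{-x}{\delta\,p(x)} \;\leq\; \frac{\delta^2}{x^2}\left[\log(-x) - \log \delta + \frac{(x-a)^2}{2\delta} + \tfrac{1}{2}\log(2\pi\delta)\right].
$$
The dominant term on the right behaves like $\tfrac{\delta (x-a)^2}{2 x^2} \to \tfrac{\delta}{2}$, and every other term vanishes. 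Therefore the limsup is at most $\tfrac{\delta}{2}$, in particular finite; combined with continuity on compacts this proves $D_0 < \infty$. The corresponding computation at $+\infty$, using the analogous upper bound $p(x) \leq \tfrac{1}{\sqrt{2\pi\delta}}\exp\left(\tfrac{-(x-b)^2}{2\delta}\right)$ and the mirror-image versions of Lemmas \ref{lem:asymptotic1} and \ref{lem:asymptotic2}, gives $D_1 < \infty$.

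I do not expect a real obstacle here: the technical heart of the argument has been front-loaded into Lemmas \ref{lem:densityasymptotic}, \ref{lem:asymptotic1}, and \ref{lem:asymptotic2}. The only mild care required is to note that the symmetric versions of the two asymptotic lemmas at $+\infty$ hold with the same proofs (the compact-support bound still applies, with $b$ in place of $a$), and to confirm that multiplying three asymptotic equivalents and then applying a logarithm preserves the $\sim$ relation under the stated conditions — both of which are standard.
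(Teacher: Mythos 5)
Your overall structure mirrors the paper exactly: reduce to a limsup at $-\infty$, multiply the three asymptotic equivalents together, and simplify to something of the form $\frac{\delta^2}{x^2}\log\frac{-x}{\delta\,p(x)}$ whose limsup must be shown finite. However, the final bounding step contains a genuine sign error. You write that the \emph{upper} bound $p(x)\leq\tfrac{1}{\sqrt{2\pi\delta}}\exp\!\left(\tfrac{-(x-a)^2}{2\delta}\right)$ yields $-\log p(x)\leq\tfrac{(x-a)^2}{2\delta}+\tfrac12\log(2\pi\delta)$. But an upper bound on $p$ gives a \emph{lower} bound on $-\log p$: from $p(x)\leq C$ one gets $\log p(x)\leq\log C$, hence $-\log p(x)\geq-\log C$. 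Thus the inequality you need is reversed, and this step does not bound the limsup from above at all.

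What you actually need — and what the paper uses — is a \emph{lower} bound on $p$. Since $\mathrm{supp}(\mu)\subseteq[a,b]$ and $x\leq a$ forces $|x-t|\leq|x-b|$ for $t\in[a,b]$, one gets $p(x)\geq\tfrac{1}{\sqrt{2\pi\delta}}\exp\!\left(\tfrac{-(x-b)^2}{2\delta}\right)$, hence $-\log p(x)\leq\tfrac{(x-b)^2}{2\delta}+\tfrac12\log(2\pi\delta)$, and now $\tfrac{\delta^2}{x^2}(-\log p(x))\to\tfrac{\delta}{2}$ gives the desired finiteness. The numerical answer you quoted ($\delta/2$) happens to be the same because $(x-a)^2/x^2$ and $(x-b)^2/x^2$ both tend to $1$, which may be why the slip went unnoticed; but the logic as written does not establish the bound. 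Note also that you do correctly use the upper bound on $p$ earlier (to show $\int_{-\infty}^x p\to 0$), so both inequalities are needed — just in the right places. Everything else in your argument (continuity on compacts, the log-asymptotic lemma, mirroring at $+\infty$ with $a\leftrightarrow b$) matches the paper and is fine.
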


\begin{proof}
We first consider $D_0$. By the observations made at the beginning of this section, it suffices to show that
$$
\limsup_{x\rightarrow-\infty}
\left(\int_{-\infty}^xp(t)dt\cdot\log\frac{1}{\int_{-\infty}^xp(t)dt}\cdot\int_x^m\frac{1}{p(t)}dt\right)
$$
is finite. By Lemmas \ref{lem:asymptotic1} and \ref{lem:asymptotic2},
\begin{align*}
&\limsup_{x\rightarrow-\infty}
\left(\int_{-\infty}^xp(t)dt\cdot\log\frac{1}{\int_{-\infty}^xp(t)dt}\cdot\int_x^m\frac{1}{p(t)}dt\right)\\
=&\limsup_{x\rightarrow-\infty}
-\frac{\delta}{x}p(x)\cdot\log\left(-\frac{x}{\delta}\frac{1}{p(x)}\right)\cdot\left(-\frac{\delta}{xp(x)}\right)\\
=&\limsup_{x\rightarrow-\infty}
\frac{\delta^2}{x^2}\left(\log\left(\frac{-x}{\delta}\right)-\log p(x)\right)\\
=&\limsup_{x\rightarrow-\infty}\frac{\delta^2}{x^2}\left(-\log p(x)\right).
\end{align*}
We just now need to show that $\limsup_{x\rightarrow-\infty}\frac{\delta^2}{x^2}\left(-\log p(x)\right)<\infty.$
But for $x\leq a$,
\begin{align*}
p(x)&=\int \frac{1}{\sqrt{2\pi\delta}}\exp\left(\frac{-(x-t)^2}{2\delta}\right)d\mu(t)\\
&\geq \int \frac{1}{\sqrt{2\pi\delta}}\exp\left(\frac{-(x-b)^2}{2\delta}\right)d\mu(t)\\
&=\frac{1}{\sqrt{2\pi\delta}}\exp\left(\frac{-(x-b)^2}{2\delta}\right)
\end{align*}
so that
\begin{align*}
&\limsup_{x\rightarrow-\infty}\frac{\delta^2}{x^2}\left(-\log p(x)\right)\\
\leq&\limsup_{x\rightarrow-\infty}
-\frac{\delta^2}{x^2}\log\left(\frac{1}{\sqrt{2\pi\delta}}\exp\left(\frac{-(x-b)^2}{2\delta}\right) \right)\\
=&\limsup_{x\rightarrow-\infty}
-\frac{\delta^2}{x^2}\left(\log\left(\frac{1}{\sqrt{2\pi\delta}}\right)+\frac{-(x-b)^2}{2\delta}\right)\\
=&\frac{\delta}{2}<\infty.
\end{align*}
Therefore $D_0<\infty$.\\

The proof that $D_1<\infty$ is practically identical, the relevant ingredients being the following:
\begin{align*}
&1-F(x)=\int_x^\infty p(t)dt,\\
&\lim_{x\rightarrow+\infty}\frac{\delta p'(x)}{-x p(x)}=1,\\
&\int_x^\infty p(t)dt \sim \frac{\delta}{x}p(x)  \hspace{3mm}\mbox{as} \hspace{3mm} x\rightarrow+\infty, \hspace{3mm} \mbox{and}\\
&\int_m^x\frac{1}{p(t)}dt \sim \frac{\delta}{x p(x)} \hspace{3mm} \mbox{as}  \hspace{3mm}x\rightarrow+\infty.
\end{align*}
Details are omitted.
\end{proof}

Theorem 2 now immediately follows from Proposition \ref{prop:D0D1}.

\end{proof}

\section{Application to Random Matrices}\label{sec:app}

We now give an application of our theorem to random matrix theory. For each natural number $n$, let $Y_n$ be an $n\times n$
random real symmetric matrix whose upper triangular entries are i.i.d. with distribution $\nu$, and let
$X_n = \frac{1}{\sqrt{n}}Y_n$. By a classical result in random matrix theory due to Wigner \cite{Wi55,Wi58}, if $\nu$
has finite second moment, then the empirical law of eigenvalues of $X_n$ converges weakly to its mean in probability.
That is: let $\lambda^n_1,\lambda^n_2,\ldots,\lambda^n_n$ be the (necessarily real) eigenvalues of $X_n$, and let
$$
\mu_{X_n} = \frac{1}{n}\sum_{k=1}^n \delta_{\lambda^n_k}.
$$
Then for all $\epsilon>0$ and all Lipschitz $f:\mathbb{R}\rightarrow\mathbb{R}$,
$$
\mathbb{P}\left(\left|\int f \mbox{ } d\mu_{X_n}-\mathbb{E}\left(\int f \mbox{ } d\mu_{X_n}\right)\right|\geq \epsilon\right)\rightarrow 0
$$
as $n\rightarrow\infty$.

The original proof of this fact was combinatorial in nature; in 2008, Guionnet (see \cite{Gu09}, p.70, Thm. 6.6) proved this convergence using
logarithmic Sobolev inequalities in the special case where $\nu$ satisfies a LSI, using the following theorem:

\begin{theorem}\label{thm:g}
$\mathrm{(Guionnet }$ \cite{Gu09}$\mathrm{).}$  Let $\nu, X_n$ be as above. If $\nu$ satisfies a LSI with constant $c$, then for all $\epsilon>0$ and all Lipschitz $f:\mathbb{R}\rightarrow\mathbb{R}$,
$$
\mathbb{P}\left(\left|\int f \mbox{ } d\mu_{X_n}-\mathbb{E}\left(\int f \mbox{ } d\mu_{X_n}\right)\right|\geq \epsilon\right)\leq 2\exp\left(\frac{-n^2 \epsilon^2}{4c||f||_{\mathrm{Lip}}^2}\right),
$$
where
$$
||f||_{\mathrm{Lip}}=\sup_{x\neq y}\frac{|f(x)-f(y)|}{|x-y|}.
$$
\end{theorem}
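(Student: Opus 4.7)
The plan is to reduce the probability bound to a Lipschitz concentration estimate for a function of the upper-triangular entries of $Y_n$, and then apply the Herbst inequality (Theorem \ref{thm:herbst}). Three ingredients drive the argument: tensorization of the log-Sobolev inequality, the Hoffman--Wielandt inequality for eigenvalue perturbations, and Herbst itself.

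First, view the upper-triangular entries of $Y_n$ as a vector $y=(Y_{ij})_{i\leq j}\in\mathbb{R}^N$, with $N=n(n+1)/2$, whose law is the product $\nu^{\otimes N}$. The standard tensorization property of LSI implies that $\nu^{\otimes N}$ satisfies a LSI with the same constant $c$ on $\mathbb{R}^N$: entropy is subadditive on products, and $|\nabla g|^2$ decomposes coordinatewise. Next, define $F:\mathbb{R}^N\to\mathbb{R}$ by $F(y)=\int f\,d\mu_{X_n}=\frac{1}{n}\sum_{k=1}^n f(\lambda_k)$, where $\lambda_1\leq\cdots\leq\lambda_n$ are the sorted eigenvalues of $X_n=Y_n/\sqrt{n}$ viewed as functions of $y$. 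For two configurations $y,y'$ producing sorted eigenvalues $\lambda_k,\lambda_k'$, chaining the Lipschitz bound on $f$, Cauchy--Schwarz, the Hoffman--Wielandt inequality $\sum_k(\lambda_k-\lambda_k')^2\leq ||X_n-X_n'||_{HS}^2$, the identity $||X_n-X_n'||_{HS}=\frac{1}{\sqrt{n}}||Y_n-Y_n'||_{HS}$, and the bound $||Y_n-Y_n'||_{HS}^2\leq 2||y-y'||^2$ (from the symmetry of $Y_n$) gives
\begin{align*}
|F(y)-F(y')|
&\leq \frac{||f||_{\mathrm{Lip}}}{n}\sum_{k=1}^n |\lambda_k-\lambda_k'|\\
&\leq \frac{||f||_{\mathrm{Lip}}}{\sqrt{n}}\Bigl(\sum_{k=1}^n (\lambda_k-\lambda_k')^2\Bigr)^{1/2}\\
&\leq \frac{||f||_{\mathrm{Lip}}}{\sqrt{n}}||X_n-X_n'||_{HS}\\
&=\frac{||f||_{\mathrm{Lip}}}{n}||Y_n-Y_n'||_{HS}\\
&\leq \frac{\sqrt{2}\,||f||_{\mathrm{Lip}}}{n}||y-y'||,
\end{align*}
so $||F||_{\mathrm{Lip}}\leq \sqrt{2}\,||f||_{\mathrm{Lip}}/n$. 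Applying Theorem \ref{thm:herbst} to $\nu^{\otimes N}$ and $F$ with $\lambda=\epsilon$ yields
$$
\mathbb{P}\bigl(|F-\mathbb{E}F|\geq\epsilon\bigr)\leq 2\exp\left(-\frac{\epsilon^2}{2c\cdot 2||f||_{\mathrm{Lip}}^2/n^2}\right)=2\exp\left(-\frac{n^2\epsilon^2}{4c\,||f||_{\mathrm{Lip}}^2}\right),
$$
which is exactly the claim.

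The main obstacle is the Lipschitz estimate. A cruder argument using Weyl's operator-norm bound $\max_k|\lambda_k-\lambda_k'|\leq ||X_n-X_n'||_{op}$ would yield only $||F||_{\mathrm{Lip}}=O(1/\sqrt{n})$, producing an exponent of order $n$ rather than $n^2$. Pairing Cauchy--Schwarz with Hoffman--Wielandt is precisely what converts the $n$-term sum of eigenvalue gaps into a Hilbert--Schmidt control, and the normalization $X_n=Y_n/\sqrt{n}$ supplies the remaining factor of $1/\sqrt{n}$; together they give both the $n^2$ scaling and the sharp constant $4$ in the denominator of the exponent.
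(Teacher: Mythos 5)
The paper does not prove Theorem \ref{thm:g}; it cites it directly to Guionnet's lecture notes, so there is no in-paper proof to compare against. Your argument is correct and is exactly the standard derivation one finds in that reference: tensorize the LSI of $\nu$ to the product law on the $N=n(n+1)/2$ independent upper-triangular entries (via Theorem \ref{thm:gross}), show that $y\mapsto\frac1n\sum_k f(\lambda_k)$ is Lipschitz with norm at most $\sqrt2\,||f||_{\mathrm{Lip}}/n$ by chaining the Lipschitz bound, Cauchy--Schwarz, Hoffman--Wielandt (Lemma \ref{lem:hw}), the $1/\sqrt n$ normalization, and the symmetry bound $||Y_n-Y_n'||_{\mathrm{HS}}^2\le 2||y-y'||^2$, and then invoke Herbst (Theorem \ref{thm:herbst}); plugging in produces the stated exponent $-n^2\epsilon^2/(4c||f||_{\mathrm{Lip}}^2)$ exactly.
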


The convergence proven by Wigner is in fact almost sure convergence; if $\nu$ also satisfies a LSI, then one could also use Theorem \ref{thm:g} and the Borel-Cantelli Lemma to deduce almost sure convergence. Using Theorems \ref{thm:z} and \ref{thm:g}, we give a new proof of the slightly weaker result that the empirical law of
eigenvalues $\mu_{X_n}$ converges weakly to its mean in probability, {\em regardless} of whether or not $\nu$ satisfies a LSI (we still assume finite second moment).
We first state a lemma from matrix theory (see \cite{HW53}, p.37, Thm. 1, and p.39, Rk. 2):

\begin{lemma}\label{lem:hw}
$\mathrm{(Hoffman\mbox{, }Wielandt).}$ Let $A, B$ be symmetric $n\times n$ matrices with eigenvalues $\lambda^A_1\leq\lambda^A_2\leq\ldots\leq\lambda^A_n$ and $\lambda^B_1\leq\lambda^B_2\leq\ldots\leq\lambda^B_n$. Then
$$
\sum_{i=1}^n (\lambda^A_i-\lambda^B_i)^2\leq\mathrm{Tr}[(A-B)^2].
$$
\end{lemma}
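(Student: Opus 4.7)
The plan is to reduce the inequality to a statement about doubly stochastic matrices acting on the ordered eigenvalue sequences, and then invoke Birkhoff's theorem together with a rearrangement inequality. First I would diagonalize: write $A=UD_AU^T$ and $B=VD_BV^T$ with $U,V$ orthogonal and $D_A,D_B$ diagonal holding the eigenvalues in increasing order. Then
$$
\mathrm{Tr}[(A-B)^2]=\mathrm{Tr}(A^2)+\mathrm{Tr}(B^2)-2\,\mathrm{Tr}(AB)=\sum_i(\lambda_i^A)^2+\sum_i(\lambda_i^B)^2-2\,\mathrm{Tr}(AB),
$$
while $\sum_i(\lambda_i^A-\lambda_i^B)^2$ has the same first two sums and a cross term $-2\sum_i\lambda_i^A\lambda_i^B$. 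So the whole inequality reduces to
$$
\mathrm{Tr}(AB)\leq\sum_{i=1}^n\lambda_i^A\lambda_i^B.
$$

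Next I would rewrite the left side in terms of the orthogonal matrix $W:=U^TV$. Using cyclicity of trace,
$$
\mathrm{Tr}(AB)=\mathrm{Tr}(D_AWD_BW^T)=\sum_{i,j}\lambda_i^A\lambda_j^B\,w_{ij}^2.
$$
Let $p_{ij}:=w_{ij}^2$. Since $W$ is orthogonal, each row and column of $(p_{ij})$ sums to $1$, so $P$ is a doubly stochastic matrix. Thus the target inequality becomes
$$
\sum_{i,j}\lambda_i^A\lambda_j^B\,p_{ij}\;\leq\;\sum_i\lambda_i^A\lambda_i^B,
$$
a linear inequality in $P$ over the Birkhoff polytope of doubly stochastic matrices.

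Finally I would maximize the linear functional $P\mapsto\sum_{i,j}\lambda_i^A\lambda_j^B p_{ij}$ over this polytope. By Birkhoff's theorem every doubly stochastic matrix is a convex combination of permutation matrices, so the maximum is attained at a permutation matrix, i.e.\ by some permutation $\sigma$ of $\{1,\dots,n\}$, giving the value $\sum_i\lambda_i^A\lambda_{\sigma(i)}^B$. The classical rearrangement inequality then says that, since both $(\lambda_i^A)$ and $(\lambda_i^B)$ are sorted in the same (increasing) order, this sum is maximized when $\sigma$ is the identity. Combining these two observations yields the desired bound and hence the lemma.

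The step I expect to be the main conceptual hurdle is recognizing that $(w_{ij}^2)$ is doubly stochastic and that the problem is then a linear optimization over the Birkhoff polytope; once that reduction is in hand, Birkhoff's theorem and the rearrangement inequality finish the argument with only bookkeeping. Everything else is straightforward trace manipulation.
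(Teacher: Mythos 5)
Your proof is correct. The paper does not prove this lemma at all; it is stated as a quoted fact with a citation to the original 1953 Hoffman--Wielandt paper. The argument you give — reduce the claim to $\mathrm{Tr}(AB)\le\sum_i\lambda_i^A\lambda_i^B$, observe that $\mathrm{Tr}(AB)=\sum_{i,j}\lambda_i^A\lambda_j^B\,w_{ij}^2$ with $(w_{ij}^2)$ doubly stochastic, then optimize over the Birkhoff polytope and finish with the rearrangement inequality — is exactly the classical proof in that reference, so in effect you have reproduced the proof the paper is pointing to.
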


We now prove weak convergence in probability of the empirical law eigenvalues of $X_n$ to its mean.

\begin{theorem}\label{thm:evalconverge}
Let $\nu$ be a probability measure on $\mathbb{R}$ with finite second moment.
For each natural number $n$, let $Y_n$ be an $n\times n$ random real symmetric matrix
whose upper triangular entries are i.i.d. with distribution $\nu$, and let $X_n=\frac{1}{\sqrt{n}}Y_n$.
Then the empirical law of eigenvalues $\mu_{X_n}$ of $X_n$ converges weakly to its mean in probability.
\end{theorem}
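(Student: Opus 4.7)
The plan is a truncate-then-mollify approximation coupled with Theorem \ref{thm:g} (Guionnet). Fix $\epsilon > 0$, a Lipschitz $f$, and $\eta > 0$; the goal is to show $\mathbb{P}(|\int f\, d\mu_{X_n} - \mathbb{E}\int f\, d\mu_{X_n}| \geq \epsilon) < \eta$ for $n$ sufficiently large. Introduce parameters $R > 0$ and $\delta > 0$ (to be fixed at the end), and couple $Y_n$ to an auxiliary symmetric matrix $\tilde Y_n$ whose $(i,j)$ entry, for $i \leq j$, is $(Y_n)_{ij}\mathds{1}_{\{|(Y_n)_{ij}|\leq R\}} + g_{ij}$, where $\{g_{ij}\}_{i\leq j}$ are i.i.d.\ $N(0,\delta)$ independent of $Y_n$ and $g_{ji} = g_{ij}$. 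The upper-triangular entries of $\tilde Y_n$ are i.i.d., distributed as the law of $z\mathds{1}_{\{|z|\leq R\}}$ (supported in $[-R,R]$) convolved with $\gamma_\delta$. By Theorem \ref{thm:z} this distribution satisfies a LSI with some constant $c = c(R,\delta)$ that depends only on the one-dimensional entry distribution, and in particular not on $n$. Set $\tilde X_n = \tfrac{1}{\sqrt n}\tilde Y_n$.

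Applying Theorem \ref{thm:g} to $\tilde X_n$ yields
$$
\mathbb{P}\left(\left|\int f\, d\mu_{\tilde X_n} - \mathbb{E}\int f\, d\mu_{\tilde X_n}\right| \geq \frac{\epsilon}{3}\right) \leq 2\exp\left(-\frac{n^2\epsilon^2}{36\, c\, \|f\|_{\mathrm{Lip}}^2}\right),
$$
which tends to $0$ as $n \to \infty$ for each fixed $R, \delta$. To compare $\mu_{X_n}$ with $\mu_{\tilde X_n}$, Lemma \ref{lem:hw} applied to the sorted eigenvalues, followed by Cauchy-Schwarz, gives
$$
\left|\int f\, d\mu_{X_n} - \int f\, d\mu_{\tilde X_n}\right| \leq \|f\|_{\mathrm{Lip}}\left(\frac{1}{n}\mathrm{Tr}\bigl[(X_n - \tilde X_n)^2\bigr]\right)^{1/2}.
$$
The expectation of the quantity inside the square root is $\frac{1}{n^2}\sum_{i,j}\mathbb{E}[(z\mathds{1}_{\{|z|>R\}} - g)^2]$ with $z \sim \nu$ and $g \sim N(0,\delta)$ independent, and by $(a-b)^2 \leq 2a^2 + 2b^2$ is bounded by $2\mathbb{E}[z^2\mathds{1}_{\{|z|>R\}}] + 2\delta$. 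Since $\nu$ has finite second moment, dominated convergence drives this quantity to $0$ as $R \to \infty$ and $\delta \to 0$. Markov's inequality then gives an $n$-independent tail bound on the probability that the above Lipschitz comparison exceeds $\epsilon/3$, while Jensen's inequality controls $|\mathbb{E}\int f\, d\mu_{X_n} - \mathbb{E}\int f\, d\mu_{\tilde X_n}|$ by the square root of the same quantity.

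Combining via the triangle inequality: first choose $R$ large and $\delta$ small enough that the Markov-Hoffman-Wielandt bound is at most $\eta/2$ and the mean shift is strictly less than $\epsilon/3$, both uniformly in $n$; then let $n \to \infty$ so that Theorem \ref{thm:g}'s tail bound is at most $\eta/2$. The main obstacle is essentially bookkeeping: one must verify that the LSI constant produced by Theorem \ref{thm:z} depends only on the entry distribution and not on the matrix dimension (so that the exponent $-n^2\epsilon^2/(36c\|f\|_{\mathrm{Lip}}^2)$ genuinely decays), and that the $\tfrac{1}{n^2}$ normalization in the trace exactly cancels the $n^2$ entries of the coupled difference, yielding an $n$-uniform control on the truncation-mollification error. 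Everything else is routine.
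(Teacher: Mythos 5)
Your argument is correct, and it takes a genuinely different route from the paper. The paper proceeds in two stages: it first proves the result assuming the entries of $Y_n$ are already bounded (adding Gaussian noise $\sqrt{\delta}G_n$ and invoking Theorem~\ref{thm:z}), then removes the boundedness assumption at the end with a separate cutoff argument. The crucial subtlety in the paper's first stage is that they insist on sending $\delta=\delta(n)\to 0$ (so that the coupling and mean-shift terms go to zero, not merely stay small), and since $c(\delta)$ can blow up as $\delta\to 0$ (cf.\ Theorem~\ref{thm:blowup}), they need an additional diagonalization lemma (Lemma~\ref{lem:deltato0}) choosing $\delta(n)\to 0$ slowly enough that $c(\delta(n))\leq n$, keeping Guionnet's exponential bound decaying. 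You avoid all of this by truncating and mollifying simultaneously and then fixing $R,\delta$ once and for all (depending on $\epsilon,\eta,f$ but not on $n$): the LSI constant $c(R,\delta)$ is then a fixed number, so $\exp(-n^2\epsilon^2/(36c\|f\|_{\mathrm{Lip}}^2))\to 0$ trivially, and the coupling and mean-shift errors, while not tending to zero in $n$, are $n$-uniformly bounded by $\tfrac{9\|f\|_{\mathrm{Lip}}^2}{\epsilon^2}\bigl(2\mathbb{E}[z^2\mathds{1}_{\{|z|>R\}}]+2\delta\bigr)$ and $\|f\|_{\mathrm{Lip}}\bigl(2\mathbb{E}[z^2\mathds{1}_{\{|z|>R\}}]+2\delta\bigr)^{1/2}$ respectively, which is all one needs for the $\limsup$ to be $<\eta$. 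Your single-stage argument is cleaner and eliminates the need for Lemma~\ref{lem:deltato0}; what the paper's version buys is a sharper statement in the compactly supported case (an explicit rate, and actual convergence of the bound to zero rather than only a $\limsup$ estimate), which is not required for the stated theorem.
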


\begin{proof}
Suppose $\nu$ has finite second moment. We will first suppose that the entries of $Y_n$ are bounded, so that $\nu$ is compactly supported;
we will remove this assumption at the end of the proof (on page \pageref{generalcase}).

Let $\epsilon >0$, and let $f:\mathbb{R}\rightarrow\mathbb{R}$ be Lipschitz. For each $n$, let $\widetilde{Y}_n=Y_n+\sqrt{\delta}G_n$ and $\widetilde{X}_n=\frac{1}{\sqrt{n}}\widetilde{Y}_n$, where $G_n$ is a random symmetric matrix whose upper triangular entries are independent (and independent of $Y_n$) and all have a Gaussian distribution with mean 0 and variance 1, and $\delta=\delta(n)$ is a positive real number that we will send to 0 as $n\rightarrow\infty$ (later in the proof).

By construction, the distributions of the entries of $\widetilde{Y}_n$ are the convolution of $\nu$ (a compactly supported measure) with a Gaussian of variance $\delta$. So by Theorem \ref{thm:z}, the distributions of the entries of $\widetilde{Y}_n$ satisfy a LSI with constant $c$ for some $c=c(\delta)$. Now
\begin{equation}\label{eqn:epsover3}
\begin{aligned}
\mathbb{P}\left(\left|\int f \mbox{ } d\mu_{X_n}-\mathbb{E}\left(\int f \mbox{ } d\mu_{X_n}\right)\right|\geq \epsilon\right)
&\leq\mathbb{P}\left(\left|\int f \mbox{ } d\mu_{X_n}-\int f \mbox{ } d\mu_{\widetilde{X}_n}\right|\geq \frac{\epsilon}{3}\right)\\
&+\mathbb{P}\left(\left|\int f \mbox{ } d\mu_{\widetilde{X}_n}-\mathbb{E}\left(\int f \mbox{ } d\mu_{\widetilde{X}_n}\right)\right|\geq \frac{\epsilon}{3}\right)\\
&+\mathbb{P}\left(\left|\mathbb{E}\left(\int f \mbox{ } d\mu_{\widetilde{X}_n}\right)-\mathbb{E}\left(\int f \mbox{ } d\mu_{X_n}\right)\right|\geq \frac{\epsilon}{3}\right),
\end{aligned}
\end{equation}
where $\mu_{X_n}$ and $\mu_{\widetilde{X}_n}$ are the empirical laws of eigenvalues for $X_n$ and $\widetilde{X}_n$. We now show that each of the three terms on the right-hand side of (\ref{eqn:epsover3}) tend to 0 as $n\rightarrow\infty$.

\begin{lemma}\label{lem:epsover31}
$$
\mathbb{P}\left(\left|\int f \mbox{ } d\mu_{X_n}-\int f \mbox{ } d\mu_{\widetilde{X}_n}\right|\geq \frac{\epsilon}{3}\right)
\leq\frac{9 ||f||_{\mathrm{Lip}}^2}{\epsilon^2}\mbox{ }\delta.
$$
\end{lemma}

\begin{proof}
Let $\lambda^n_1\leq\lambda^n_2\leq\ldots\leq\lambda^n_n$ and $\widetilde{\lambda}^n_1\leq\widetilde{\lambda}^n_2\leq\ldots\leq\widetilde{\lambda}^n_n$ be the eigenvalues of $X_n$ and $\widetilde{X}_n$. Then by the Cauchy-Schwarz inequality and Lemma \ref{lem:hw},
\begin{align*}
\left|\int f \mbox{ } d\mu_{X_n}-\int f \mbox{ } d\mu_{\widetilde{X}_n}\right|
&= \left|\frac{1}{n}\sum_{i=1}^n f(\lambda^n_i)-f(\widetilde{\lambda}^n_i)\right|\\
&\leq \frac{1}{n}\sum_{i=1}^n ||f||_{\mathrm{Lip}}\left|\lambda^n_i-\widetilde{\lambda}^n_i\right|\\
&\leq \frac{||f||_{\mathrm{Lip}}}{\sqrt{n}}\left(\sum_{i=1}^n (\lambda^n_i-\widetilde{\lambda}^n_i)^2\right)^{1/2}\\
&\leq \frac{||f||_{\mathrm{Lip}}}{\sqrt{n}}\left(\mathrm{Tr}[(X_n-\widetilde{X}_n)^2]\right)^{1/2}.
\end{align*}
By Markov's inequality, we therefore have
\begin{align*}
\mathbb{P}\left(\left|\int f \mbox{ } d\mu_{X_n}-\int f \mbox{ } d\mu_{\widetilde{X}_n}\right|\geq \frac{\epsilon}{3}\right)
&\leq \mathbb{P}\left(\frac{||f||_{\mathrm{Lip}}}{\sqrt{n}}\left(\mathrm{Tr}[(X_n-\widetilde{X}_n)^2]\right)^{1/2}\geq \frac{\epsilon}{3}\right)\\
&=\mathbb{P}\left(\mathrm{Tr}[(X_n-\widetilde{X}_n)^2]\geq \frac{\epsilon^2 n}{9 ||f||_{\mathrm{Lip}}^2}\right)\\
&\leq \frac{9 ||f||_{\mathrm{Lip}}^2}{\epsilon^2 n}\mbox{ }\mathbb{E}\left(\mathrm{Tr}[(X_n-\widetilde{X}_n)^2]\right)\\
&=\frac{9 ||f||_{\mathrm{Lip}}^2}{\epsilon^2 n}\sum_{1\leq i,j \leq n}\mathbb{E}\left(([X_n]_{ij}-[\widetilde{X}_n]_{ij})^2\right)\\
&=\frac{9 ||f||_{\mathrm{Lip}}^2}{\epsilon^2 n}\sum_{1\leq i,j \leq n}\mathbb{E}\left(\frac{\delta}{n}[G_n]_{ij}^2\right)\\
&=\frac{9 ||f||_{\mathrm{Lip}}^2}{\epsilon^2 n}\cdot\delta n,\\
&=\frac{9 ||f||_{\mathrm{Lip}}^2}{\epsilon^2}\mbox{ }\delta,
\end{align*}
the second to last equality being because the entries of $G_n$ have mean 0 and variance 1.
\end{proof}

\begin{lemma}\label{lem:epsover32}
$$
\mathbb{P}\left(\left|\int f \mbox{ } d\mu_{\widetilde{X}_n}-\mathbb{E}\left(\int f \mbox{ } d\mu_{\widetilde{X}_n}\right)\right|\geq \frac{\epsilon}{3}\right)
\leq 2\exp\left(\frac{-n^2 \epsilon^2}{36 c||f||_{\mathrm{Lip}}^2}\right),
$$
where $c=c(\delta)$ is the log Sobolev constant for $\nu*\gamma_\delta$.
\end{lemma}

\begin{proof}
This is immediate from Theorem \ref{thm:g}.
\end{proof}

\begin{lemma}\label{lem:epsover33}
If $\delta(n)\rightarrow0$ as $n\rightarrow\infty$, then
$$
\mathbb{P}\left(\left|\mathbb{E}\left(\int f \mbox{ } d\mu_{\widetilde{X}_n}\right)-\mathbb{E}\left(\int f \mbox{ } d\mu_{X_n}\right)\right|\geq \frac{\epsilon}{3}\right) = 0
$$
for all $n$ sufficiently large.
\end{lemma}

\begin{proof}
Note that the sequence
$$
\left|\mathbb{E}\left(\int f \mbox{ } d\mu_{\widetilde{X}_n}\right)-\mathbb{E}\left(\int f \mbox{ } d\mu_{X_n}\right)\right|
$$
is a sequence of real numbers, so the above probability will eventually be equal to 0 if
$\left|\mathbb{E}\left(\int f \mbox{ } d\mu_{\widetilde{X}_n}\right)-\mathbb{E}\left(\int f \mbox{ } d\mu_{X_n}\right)\right|$
converges to 0 as $n\rightarrow\infty$. Doing similar estimates as in Lemma \ref{lem:epsover31}, we get
\begin{align*}
\left|\mathbb{E}\left(\int f \mbox{ } d\mu_{\widetilde{X}_n}\right)-\mathbb{E}\left(\int f \mbox{ } d\mu_{X_n}\right)\right|
&=\left|\mathbb{E}\left(\int f \mbox{ } d\mu_{\widetilde{X}_n}-\int f \mbox{ } d\mu_{X_n}\right)\right|\\
&\leq\mathbb{E}\left(\left|\int f \mbox{ } d\mu_{\widetilde{X}_n}-\int f \mbox{ } d\mu_{X_n}\right|\right)\\
&\leq \mathbb{E}\left(\frac{||f||_{\mathrm{Lip}}}{\sqrt{n}}\left(\mathrm{Tr}[(X_n-\widetilde{X}_n)^2]\right)^{1/2}\right)\\
&\leq \frac{||f||_{\mathrm{Lip}}}{\sqrt{n}}\left(\mathbb{E}\left(\mathrm{Tr}[(X_n-\widetilde{X}_n)^2]\right)\right)^{1/2}\\
&= \frac{||f||_{\mathrm{Lip}}}{\sqrt{n}}\cdot(\delta\cdot n)^{1/2}\\
&=||f||_{\mathrm{Lip}}\sqrt{\delta(n)}\\
&\rightarrow 0 \hspace{3mm}\mbox{as} \hspace{3mm}n\rightarrow\infty,
\end{align*}
the last inequality following from H\"{o}lder's inequality applied to $\left(\mathrm{Tr}[(X_n-\widetilde{X}_n)^2]\right)^{1/2}$
and the constant function 1. So
$\mathbb{P}\left(\left|\mathbb{E}\left(\int f \mbox{ } d\mu_{\widetilde{X}_n}\right)-\mathbb{E}\left(\int f \mbox{ } d\mu_{X_n}\right)\right|\geq \frac{\epsilon}{3}\right)=0$ for all sufficiently large $n$.
\end{proof}

We now construct our $\delta=\delta(n)$ so that $\delta\rightarrow 0$ at the appropriate rate as $n\rightarrow\infty$.

\begin{lemma}\label{lem:deltato0}
There exists a sequence $\delta(n)$ of positive real numbers such that
\begin{align*}
&(i) \hspace{3mm}\delta(n)\rightarrow 0 \hspace{2mm}\mbox{as} \hspace{2mm}n\rightarrow\infty, \hspace{3mm}\mbox{and}\\
&(ii)\hspace{3mm}\mbox{For all sufficiently large }n, \mbox{ } c(\delta(n))\leq n.
\end{align*}
\end{lemma}

\begin{proof}
For each $n$,
let $a_n=\inf\{\delta>0\mbox{ }|\mbox{  }c(\delta)\leq n\}$. Clearly $a_n$ is finite for all sufficiently large $n$. Also, $a_n\rightarrow 0$ as $n\rightarrow \infty$ since for every $\delta>0$ there is some $n_0$ such that $c(\delta)\leq n$ so that $a_n\leq\delta$ for $n\geq n_0$.

Now for each $n$ sufficiently large, let $\delta(n)$ be any number in the interval $[a_n,a_n+1/n]$ such that $c(\delta(n))\leq n$ (such a number exists by definition of $a_n$). Then since $a_n\rightarrow 0$, we have $\delta(n)\rightarrow 0$ as $n\rightarrow\infty$.
\end{proof}

To conclude the proof of Theorem \ref{thm:evalconverge} (in the case where the entries of the $Y_n$ are bounded), we apply Lemmas \ref{lem:epsover31},\ref{lem:epsover32},\ref{lem:epsover33}, and
\ref{lem:deltato0} to (\ref{eqn:epsover3}) to get that for sufficiently large $n$,
\begin{align*}
\mathbb{P}\left(\left|\int f \mbox{ } d\mu_{X_n}-\mathbb{E}\left(\int f \mbox{ } d\mu_{X_n}\right)\right|\geq \epsilon\right)
&\leq \frac{9 ||f||_{\mathrm{Lip}}^2}{\epsilon^2}\mbox{ }\delta(n) + 2\exp\left(\frac{-n^2 \epsilon^2}{36c(\delta(n))||f||_{\mathrm{Lip}}^2}\right) + 0\\
&\leq \frac{9 ||f||_{\mathrm{Lip}}^2}{\epsilon^2}\mbox{ }\delta(n) + 2\exp\left(\frac{-n\epsilon^2}{36||f||_{\mathrm{Lip}}^2}\right)\\
&\rightarrow 0 \hspace{3mm}\mbox{as} \hspace{3mm}n\rightarrow\infty.
\end{align*}

We therefore have weak convergence in probability.\\

\label{generalcase}To obtain convergence in the general case where the entries of $Y_n$ need not be bounded, we apply a standard ``cutoff" argument.
Given $C\geq0$, let $\widehat{Y}_n$ be the matrix such that
$[\widehat{Y}_n]_{ij}=[Y_n]_{ij}\cdot\mathds{1}_{\{|[Y_n]_{ij}|<C\}}$, and let $\widehat{X}_n=\frac{1}{\sqrt{n}}\widehat{Y}_n$.
Note the entries of $\widehat{Y}_n$ are bounded, hence have compactly supported distribution. (We remark that it is not necessary to normalize $\widehat{Y}_n$
since no assumptions on the values of the mean or the variance of $Y_n$ were used.)

Let $\epsilon>0,$ and let $\eta>0$. Denote $[Y_n]_{1,1}$ by $Y_{11}$.
Since $\nu$ has finite second moment, there is some constant $C$ such that
$\mathbb{E}((Y_{11}-\widehat{Y}_{11})^2)<\min(1,\eta)\cdot\epsilon^2/(9||f||_{\mathrm{Lip}}^2)$.
Then, similarly as before, we have
\begin{equation}\label{eq:otherepsover3}
\begin{aligned}
\mathbb{P}\left(\left|\int f \mbox{ } d\mu_{X_n}-\mathbb{E}\left(\int f \mbox{ } d\mu_{X_n}\right)\right|\geq \epsilon\right)
&\leq\mathbb{P}\left(\left|\int f \mbox{ } d\mu_{X_n}-\int f \mbox{ } d\mu_{\widehat{X}_n}\right|\geq \frac{\epsilon}{3}\right)\\
&+\mathbb{P}\left(\left|\int f \mbox{ } d\mu_{\widehat{X}_n}-\mathbb{E}\left(\int f \mbox{ } d\mu_{\widehat{X}_n}\right)\right|\geq \frac{\epsilon}{3}\right)\\
&+\mathbb{P}\left(\left|\mathbb{E}\left(\int f \mbox{ } d\mu_{\widehat{X}_n}\right)-\mathbb{E}\left(\int f \mbox{ } d\mu_{X_n}\right)\right|\geq \frac{\epsilon}{3}\right).
\end{aligned}
\end{equation}

The first term on the right-hand side of (\ref{eq:otherepsover3}) is bounded using the same reasoning as done in the proof of Lemma \ref{lem:epsover31}:
\begin{align*}
\mathbb{P}\left(\left|\int f \mbox{ } d\mu_{X_n}-\int f \mbox{ } d\mu_{\widehat{X}_n}\right|\geq \frac{\epsilon}{3}\right)
&\leq\frac{9 ||f||_{\mathrm{Lip}}^2}{\epsilon^2 n}\sum_{1\leq i,j \leq n}\mathbb{E}\left(([X_n]_{ij}-[\widehat{X}_n]_{ij})^2\right)\\
&=\frac{9 ||f||_{\mathrm{Lip}}^2}{\epsilon^2 n}\cdot n^2\mathbb{E}\left(\frac{1}{n}(Y_{11}-\widehat{Y}_{11})^2\right)\\
&<\eta.
\end{align*}

The second term on the right-hand side of (\ref{eq:otherepsover3}) goes to 0 as $n \rightarrow\infty$ by the case we just proved.

The third term is bounded as done in Lemma \ref{lem:epsover33}:
\begin{align*}
\left|\mathbb{E}\left(\int f \mbox{ } d\mu_{\widehat{X}_n}\right)-\mathbb{E}\left(\int f \mbox{ } d\mu_{X_n}\right)\right|
&\leq \frac{||f||_{\mathrm{Lip}}}{\sqrt{n}}\left(\mathbb{E}\left(\mathrm{Tr}[(X_n-\widehat{X}_n)^2]\right)\right)^{1/2}\\
&= \frac{||f||_{\mathrm{Lip}}}{\sqrt{n}}\left(n^2\mathbb{E}\left(\frac{1}{n}(Y_{11}-\widehat{Y_{11}})^2\right)\right)^{1/2}\\
&<\frac{\epsilon}{3},
\end{align*}
so $\mathbb{P}\left(\left|\mathbb{E}\left(\int f \mbox{ } d\mu_{\widehat{X}_n}\right)-\mathbb{E}\left(\int f \mbox{ } d\mu_{X_n}\right)\right|\geq \frac{\epsilon}{3}\right)=0.$ So
$$
\limsup_{n\rightarrow\infty}\mbox{ }\mathbb{P}\left(\left|\int f \mbox{ } d\mu_{X_n}-\mathbb{E}\left(\int f \mbox{ } d\mu_{X_n}\right)\right|\geq \epsilon\right)<\eta.
$$
Since $\eta>0$ was arbitrary, we have $\mathbb{P}\left(\left|\int f \mbox{ } d\mu_{X_n}-\mathbb{E}\left(\int f \mbox{ } d\mu_{X_n}\right)\right|\geq \epsilon\right)\rightarrow0$ as $n\rightarrow\infty$, giving convergence in probability.
\end{proof}

\section{Growth of the log-Sobolev constant for convolved measures}

Although Theorem \ref{thm:z} gives that $\mu*\gamma_\delta$ satisfies a LSI with some constant $c(\delta)$, the proof
does not give any estimate for what $c(\delta)$ actually is. If $\mu$ does not satisfy a LSI, then $c(\delta)$ should blow up
as $\delta\rightarrow 0$. One would hope for a good upper bound for $c(\delta)$, in terms of $\delta$, that blows up slowly
 as $\delta\rightarrow 0$. However, we show that $c(\delta)$ can, in many cases, blow up very badly as $\delta\rightarrow 0$.

\begin{theorem}\label{thm:blowup}
Let $\mu$ be a compactly supported probability measure with disconnected support. Then for some $C>0$ the log Sobolev constant $c(\delta)$ for
$\mu*\gamma_\delta$ satisfies $c(\delta)\geq \exp\left(C/\delta\right)$ for all sufficiently small $\delta>0$.
\end{theorem}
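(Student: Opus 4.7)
The plan is to apply the lower bound half of the Bobkov--G\"otze theorem (Theorem \ref{thm:bg}): since $c(\delta)\geq\frac{1}{150}(D_0+D_1)$, it suffices to exhibit exponential-in-$1/\delta$ growth of $D_0$ or $D_1$. The disconnectedness of $\mathrm{supp}(\mu)$ supplies an open gap $(\alpha,\beta)$ disjoint from $\mathrm{supp}(\mu)$ with $\mu_-:=\mu((-\infty,\alpha])>0$ and $\mu_+:=\mu([\beta,\infty))>0$; inside this gap the density $p_\delta$ of $\mu*\gamma_\delta$ is forced to be exponentially small in $1/\delta$, and this is what drives the lower bound.

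First, write $L=\beta-\alpha$ and observe that for $t$ in the middle half of the gap, say $t\in[\alpha+L/4,\alpha+3L/4]$, every $s\in\mathrm{supp}(\mu)$ satisfies $|t-s|\geq L/4$. Substituting this into the definition of $p_\delta$ gives the uniform bound $p_\delta(t)\leq\frac{1}{\sqrt{2\pi\delta}}\exp(-L^2/(32\delta))$ throughout this subinterval. Consequently, any integral of $1/p_\delta$ over a sub-sub-interval of length $\geq L/4$ is at least of order $\sqrt{\delta}\exp(L^2/(32\delta))$.

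Second, split on the position of a median $m=m(\delta)$ of $\mu*\gamma_\delta$ relative to the midpoint $t^*=(\alpha+\beta)/2$. If $m\geq t^*$, take $x=\alpha+L/4<t^*\leq m$ and bound $D_0$ below by $F(x)\log(1/F(x))\cdot\int_{\alpha+L/4}^{\alpha+L/2} 1/p_\delta(t)\,dt$, using the inner estimate just proved. If instead $m<t^*$, work symmetrically with $x=\alpha+3L/4$ and $D_1$. To keep the leading factor away from zero, use that $\mu*\gamma_\delta\to\mu$ weakly as $\delta\to 0$, so $F(\alpha+L/4)\to\mu_-\in(0,1)$ and $1-F(\alpha+3L/4)\to\mu_+\in(0,1)$; hence for small enough $\delta$ the relevant factor $F(x)\log(1/F(x))$ (or its $D_1$ analog) is uniformly bounded below by a positive constant.

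Combining these estimates yields $D_0+D_1\geq c_1\sqrt{\delta}\exp(L^2/(32\delta))$ for some $c_1>0$ and all sufficiently small $\delta$, and Bobkov--G\"otze then gives $c(\delta)\geq\exp(C/\delta)$ for any fixed $C<L^2/32$ and all $\delta$ small. The only mildly delicate step I anticipate is the weak-convergence argument that keeps $F(x)\log(1/F(x))$ bounded away from $0$ uniformly in $\delta$ (and the book-keeping to ensure the chosen $x$ really lies on the correct side of $m$ regardless of where the median falls); everything else is a routine Gaussian tail estimate plugged into the Bobkov--G\"otze lower bound.
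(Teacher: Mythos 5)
Your proposal is correct and follows essentially the same route as the paper: lower-bound $D_0+D_1$ via Bobkov--G\"otze, case on which side of the gap's midpoint the median $m(\delta)$ falls, pick a test point $x$ in the gap, and exploit the Gaussian decay of $p_\delta$ in the gap to make $\int 1/p_\delta$ blow up like $\exp(\text{const}/\delta)$. The only differences are cosmetic: the paper takes $x$ at the gap endpoint and uses the pointwise bound $p_\delta(t)\le\frac{1}{\sqrt{2\pi\delta}}\exp(-(t-b)^2/(2\delta))$, which upon integrating yields the sharper exponent $(c-b)^2/(8\delta)$, whereas your uniform bound over the middle half gives $L^2/(32\delta)$; and the paper controls the factor $F(x)\log(1/F(x))$ by a direct Fubini computation (valid for all $\delta$, giving $\tfrac12\mu([a,b])\le F(b)\le 1-\tfrac12\mu([c,d])$) rather than by weak convergence as $\delta\to 0$, though both arguments suffice for the stated conclusion.
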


Before we prove Theorem \ref{thm:blowup}, we briefly demonstrate that such measures described above cannot satisfy a LSI. Suppose
$\mathrm{supp}(\mu)\subseteq [a,b]\cup[c,d]$, with $\mu([a,b])\neq 0,$  $\mu([c,d])\neq 0,$ and $a\leq b<c\leq d$. Let $g$ be a smooth function
such that $g=0$ on $[a,b]$, $g=1$ on $[c,d]$, and $g'=0$ on $[a,b]\cup[c,d]$. Then we easily
compute that $\mathrm{Ent}_\mu (g^2)=\mu([c,d])\cdot \log(1/\mu([c,d]))>0$ but $\int(g')^2 d\mu=0$, so the LSI (\ref{eq:lsi}) cannot hold for any $c$.

We also remark that, in light of Theorem \ref{thm:blowup}, we cannot hope to use the Borel-Cantelli Lemma to improve the convergence in probability stated in
Theorem \ref{thm:evalconverge} to almost sure convergence via this method.
\begin{proof}
Suppose $\mathrm{supp}(\mu)\subseteq [a,b]\cup[c,d]$, with $\mu([a,b])\neq 0,$  $\mu([c,d])\neq 0,$ and $a\leq b<c\leq d$.
Let $p$ be the density of $\mu*\gamma_\delta$. We will show that $D_0$, as defined in Theorem \ref{thm:bg},
is asymptotically greater than or equal to $\exp\left(C/\delta\right)$ for some $C$ as $\delta\rightarrow 0$.

We can assume without loss of generality that the median $m$ of $\mu*\gamma_\delta$ satisfies $m\geq \frac{b+c}{2}$ (if not, we consider
$D_1$ instead of $D_0$). Then
\begin{align*}
D_0
&=\sup_{x<m}\left(\int_{-\infty}^xp(t)dt\cdot\log\frac{1}{\int_{-\infty}^xp(t)dt}\cdot\int_x^m\frac{1}{p(t)}dt\right)\\
&\geq \left[\int_{-\infty}^xp(t)dt\cdot\log\frac{1}{\int_{-\infty}^xp(t)dt}\cdot\int_x^m\frac{1}{p(t)}dt\right]_{x=b}\\
&\geq \int_{-\infty}^bp(t)dt\cdot\log\frac{1}{\int_{-\infty}^bp(t)dt}\cdot\int_b^{(b+c)/2}\frac{1}{p(t)}dt.
\end{align*}
We now bound each of $\int_{-\infty}^bp(t)dt$, $\log(1/\int_{-\infty}^bp(t)dt)$, and $\int_b^{(b+c)/2}(1/p(t))dt$ from below.

First, we claim that
$$
\int_{-\infty}^bp(t)dt\geq \frac{1}{2}\mu([a,b]).
$$
To prove this, we use the definition of $p$ and Fubini's theorem:
\begin{align*}
\int_{-\infty}^bp(t)dt
&=\int_{-\infty}^b\int_a^d\frac{1}{\sqrt{2\pi\delta}}\exp\left(-\frac{(t-s)^2}{2\delta}\right)d\mu(s)\hspace{1mm}dt\\
&=\int_a^d\int_{-\infty}^b\frac{1}{\sqrt{2\pi\delta}}\exp\left(-\frac{(t-s)^2}{2\delta}\right)dt\hspace{1mm}d\mu(s)\\
&\geq\int_a^b\int_{-\infty}^{b-s}\frac{1}{\sqrt{2\pi\delta}}\exp\left(-\frac{t^2}{2\delta}\right)dt\hspace{1mm}d\mu(s)\\
&\geq\int_a^b\int_{-\infty}^0\frac{1}{\sqrt{2\pi\delta}}\exp\left(-\frac{t^2}{2\delta}\right)dt\hspace{1mm}d\mu(s)\\
&=\int_a^b\frac{1}{2}d\mu(s)\\
&=\frac{1}{2}\mu([a,b]).
\end{align*}

To bound $\log(1/\int_{-\infty}^bp(t)dt)$ from below, we bound $\int_{-\infty}^bp(t)dt$ from above:
\begin{align*}
\int_{-\infty}^bp(t)dt
&=\int_{-\infty}^b\int_a^d\frac{1}{\sqrt{2\pi\delta}}\exp\left(-\frac{(t-s)^2}{2\delta}\right)d\mu(s)\hspace{1mm}dt\\
&=\int_a^d\int_{-\infty}^b\frac{1}{\sqrt{2\pi\delta}}\exp\left(-\frac{(t-s)^2}{2\delta}\right)dt\hspace{1mm}d\mu(s)\\
&=\int_a^b\int_{-\infty}^b\frac{1}{\sqrt{2\pi\delta}}\exp\left(-\frac{(t-s)^2}{2\delta}\right)dt\hspace{1mm}d\mu(s)
+\int_c^d\int_{-\infty}^b\frac{1}{\sqrt{2\pi\delta}}\exp\left(-\frac{(t-s)^2}{2\delta}\right)dt\hspace{1mm}d\mu(s)\\
&\leq\int_a^b 1\hspace{1mm}d\mu(s)
+\int_c^d\int_{-\infty}^{b-s}\frac{1}{\sqrt{2\pi\delta}}\exp\left(-\frac{t^2}{2\delta}\right)dt\hspace{1mm}d\mu(s)\\
&\leq\mu([a,b])+\int_c^d\int_{-\infty}^0\frac{1}{\sqrt{2\pi\delta}}\exp\left(-\frac{t^2}{2\delta}\right)dt\hspace{1mm}d\mu(s)\\
&=\mu([a,b])+\frac{1}{2}\mu([c,d])\\
&=1-\frac{1}{2}\mu([c,d]).
\end{align*}

To bound $\int_b^{(b+c)/2}(1/p(t))dt$, first note that if $b\leq t\leq\frac{b+c}{2}$, then for $s\leq b$ we have $t-s\geq t-b\geq 0$ so that
$\exp(-(t-s)^2)\leq\exp(-(t-b)^2)$; also, for $s\geq c$ we have $s-t\geq c-t\geq t-b\geq 0$ so that $\exp(-(t-s)^2)\leq\exp(-(t-b)^2)$. Therefore
\begin{align*}
p(t)&=\int_a^d\frac{1}{\sqrt{2\pi\delta}}\exp\left(-\frac{(t-s)^2}{2\delta}\right)d\mu(s)\\
&=\int_a^b\frac{1}{\sqrt{2\pi\delta}}\exp\left(-\frac{(t-s)^2}{2\delta}\right)d\mu(s)
+\int_c^d\frac{1}{\sqrt{2\pi\delta}}\exp\left(-\frac{(t-s)^2}{2\delta}\right)d\mu(s)\\
&\leq\int_a^b\frac{1}{\sqrt{2\pi\delta}}\exp\left(-\frac{(t-b)^2}{2\delta}\right)d\mu(s)
+\int_c^d\frac{1}{\sqrt{2\pi\delta}}\exp\left(-\frac{(t-b)^2}{2\delta}\right)d\mu(s)\\
&=\int_a^d\frac{1}{\sqrt{2\pi\delta}}\exp\left(-\frac{(t-b)^2}{2\delta}\right)d\mu(s)\\
&=\frac{1}{\sqrt{2\pi\delta}}\exp\left(-\frac{(t-b)^2}{2\delta}\right),
\end{align*}
so that
\begin{align*}
\int_b^{(b+c)/2}\frac{1}{p(t)}dt
&\geq \int_b^{(b+c)/2}\sqrt{2\pi\delta}\cdot\exp\left(\frac{(t-b)^2}{2\delta}\right)dt\\
&=\sqrt{2\pi}\cdot\delta\int_0^{(c-b)/(2\delta)}\exp\left(\frac{u^2}{2}\right)du,
\hspace{5mm}\mbox{where}\hspace{5mm}u=\frac{t-b}{\sqrt{\delta}}\\
&\sim\frac{2\sqrt{2\pi}}{c-b}\cdot\delta^{3/2}\cdot\exp\left(\frac{(c-b)^2}{8\delta}\right)
\end{align*}
as $\delta\rightarrow 0$.

Putting these estimates together, we get
\begin{align*}
D_0
&\geq \int_{-\infty}^bp(t)dt\cdot\log\frac{1}{\int_{-\infty}^bp(t)dt}\cdot\int_b^{(b+c)/2}\frac{1}{p(t)}dt\\
&\geq \frac{1}{2}\mu([a,b])\cdot\log\frac{1}{1-\frac{1}{2}\mu([c,d])}\cdot\frac{2\sqrt{2\pi}}{c-b}\cdot\delta^{3/2}\cdot\exp\left(\frac{(c-b)^2}{8\delta}\right)\\
&\geq\exp\left(\frac{C}{\delta}\right)\hspace{3mm}\mbox{as}\hspace{3mm}\delta\rightarrow 0
\end{align*}
for any $C<(c-b)^2/8$.
\end{proof}

\section{Gaussian convolutions in higher dimensions}\label{sec:higherdim}

We are interested in an analogue of Theorem \ref{thm:z} for a compactly supported probability measure $\mu$ on $\mathbb{R}^n,$ $n\geq2$. If $\mu$ is the
product of probability measures on $\mathbb{R}$, then for any $\delta>0$, $\mu*\gamma_{\delta}$ satisfies a LSI by Theorem \ref{thm:z} and the following product property of LSIs (see \cite{Gr75}, p. 1074, Rk. 3.3):
\begin{theorem}\label{thm:gross}
$\mathrm{(Gross).}$ Let $\nu_1,\nu_2$ be probability measures on $\mathbb{R}^{n_1}$ and $\mathbb{R}^{n_2}$ that each satisfy a LSI with constants $c_1, c_2$,
respectively. Then the probability measure $\nu_1\otimes\nu_2$ on $\mathbb{R}^{n_1+n_2}$ satisfies a LSI with constant $\max(c_1, c_2)$.
\end{theorem}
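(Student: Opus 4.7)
The plan is to follow the classical tensorization argument. The starting point is the exact decomposition, valid for any $f \geq 0$ on $\mathbb{R}^{n_1+n_2}$,
\[
\mathrm{Ent}_{\nu_1\otimes\nu_2}(f) = \int \mathrm{Ent}_{\nu_1}(f(\cdot,y))\, d\nu_2(y) + \mathrm{Ent}_{\nu_2}(F),
\]
where $F(y) = \int f(x,y)\, d\nu_1(x)$. I would first establish this as a preliminary lemma by writing $\int f\log f\, d(\nu_1\otimes\nu_2)$ as an iterated integral, using the identity $\mathrm{Ent}_{\nu_1}(f(\cdot,y)) = \int f(x,y)\log f(x,y)\, d\nu_1(x) - F(y)\log F(y)$, and observing that the $F\log F$ terms combine with the definition of $\mathrm{Ent}_{\nu_2}(F)$ to reconstruct $\mathrm{Ent}_{\nu_1\otimes\nu_2}(f)$.

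I would then apply this identity with $f = g^2$ for a smooth, sufficiently integrable $g$ on $\mathbb{R}^{n_1+n_2}$. For the first summand, the LSI for $\nu_1$ applied to $g(\cdot,y)$ at each fixed $y$ produces the bound $2c_1\int|\nabla_x g|^2\, d(\nu_1\otimes\nu_2)$. For the second summand, I would set $h(y) = F(y)^{1/2}$ and apply the LSI for $\nu_2$ to $h$, which gives $\mathrm{Ent}_{\nu_2}(F) = \mathrm{Ent}_{\nu_2}(h^2) \leq 2c_2\int|\nabla_y h|^2\, d\nu_2$. The key technical estimate, obtained by differentiating $h^2 = F$ under the integral and applying Cauchy--Schwarz, is
\[
|\nabla_y h(y)|^2 = \frac{\bigl|\int g(x,y)\,\nabla_y g(x,y)\, d\nu_1(x)\bigr|^2}{\int g(x,y)^2\, d\nu_1(x)} \leq \int|\nabla_y g(x,y)|^2\, d\nu_1(x),
\]
and integrating against $\nu_2$ yields $\mathrm{Ent}_{\nu_2}(F) \leq 2c_2\int|\nabla_y g|^2\, d(\nu_1\otimes\nu_2)$.

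Adding the two bounds and using $|\nabla g|^2 = |\nabla_x g|^2 + |\nabla_y g|^2$ gives
\begin{align*}
\mathrm{Ent}_{\nu_1\otimes\nu_2}(g^2)
&\leq 2c_1\int|\nabla_x g|^2\, d(\nu_1\otimes\nu_2) + 2c_2\int|\nabla_y g|^2\, d(\nu_1\otimes\nu_2)\\
&\leq 2\max(c_1,c_2)\int|\nabla g|^2\, d(\nu_1\otimes\nu_2),
\end{align*}
which is precisely the desired LSI. I do not anticipate any serious obstacle: the only delicate points are the justification of differentiation under the integral (routine under the regularity assumed on LSI test functions) and the measure-theoretic treatment of the zero set of $F$, where $g(\cdot,y)$ vanishes $\nu_1$-almost everywhere and the standard $\epsilon$-regularization $h_\epsilon = (F+\epsilon)^{1/2}$ followed by $\epsilon\downarrow 0$ disposes of the apparent singularity in $|\nabla_y h|^2$.
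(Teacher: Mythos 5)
Your proof is correct and is the standard tensorization argument. The paper itself does not prove this statement---it defers to Guionnet's lecture notes (\cite{Gu09}, p.~52)---and the proof given there is precisely the decomposition of $\mathrm{Ent}_{\nu_1\otimes\nu_2}$ into a fibered entropy plus an entropy of the marginal, followed by the Cauchy--Schwarz estimate on $|\nabla_y F^{1/2}|^2$ that you use; so you have reconstructed the intended argument.
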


We remark that Theorem \ref{thm:gross} is stated above in a more general context than what was considered in \cite{Gr75}; for a detailed proof of
Theorem \ref{thm:gross}, see \cite{Gu09}, p.52.\\

Another sufficient condition for which a measure $\nu$ satisfies a LSI is the following (see \cite{BE85}, p. 199, Cor. 2):

\begin{theorem}\label{thm:bakem}
$\mathrm{(Bakry\mbox{, }\acute{E}mery, Ledoux).}$ Let $\nu$ be a probability measure on $\mathbb{R}^n$ with smooth, strictly positive density $p$. If there exists
$c>0$ such that $\mathrm{Hess}(-\log p)(x)-\frac{1}{c}I_n$ is positive semidefinite for all $x\in\mathbb{R}^n$, where $I_n$ is the $n\times n$ identity matrix, then $\nu$ satisfies a LSI with constant $c$.
\end{theorem}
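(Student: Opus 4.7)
The plan is to use the classical semigroup/carr\'e du champ method of Bakry--\'Emery. Write $V = -\log p$ and let $L = \Delta - \nabla V \cdot \nabla$ generate the overdamped Langevin diffusion associated with $\nu$; integration by parts gives $\int(Lf)g\,d\nu = -\int\nabla f\cdot\nabla g\,d\nu$, so $\nu$ is reversible and invariant for the associated semigroup $(P_t)_{t\ge 0}$. Define the carr\'e du champ $\Gamma(f) = |\nabla f|^2$ and its iterate
\[
\Gamma_2(f) = \tfrac{1}{2}L\Gamma(f) - \nabla f\cdot\nabla Lf.
\]
A direct computation (Bochner's identity) yields
\[
\Gamma_2(f) = \|\mathrm{Hess}(f)\|_{HS}^2 + \mathrm{Hess}(V)(\nabla f,\nabla f),
\]
so the hypothesis $\mathrm{Hess}(V)\succeq \tfrac{1}{c}I_n$ produces the pointwise curvature bound $\Gamma_2(f)\ge \tfrac{1}{c}\Gamma(f)$ for every smooth $f$.

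Next I would use this bound to derive a local logarithmic Sobolev inequality. Fix $t>0$ and a smooth $f>0$ bounded away from $0$ and $\infty$, and set $\psi(s) = P_s((P_{t-s}f)\log P_{t-s}f)$ for $0\le s\le t$. The chain rule identity $L(u\log u) - (1+\log u)Lu = \Gamma(u)/u$ gives $\psi'(s) = P_s(\Gamma(P_{t-s}f)/P_{t-s}f)$, and a second $\Gamma_2$-calculus argument (differentiating $s\mapsto P_s(\Gamma(P_{\tau-s}f)/P_{\tau-s}f)$ and invoking $\Gamma_2\ge \tfrac{1}{c}\Gamma$) produces the commutation estimate $\Gamma(P_\tau f)/P_\tau f \le e^{-2\tau/c}\,P_\tau(\Gamma(f)/f)$. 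Plugging this back into $\psi(t)-\psi(0) = \int_0^t\psi'(s)\,ds$ and integrating the exponential yields the \emph{local LSI}
\[
P_t(f\log f) - (P_tf)\log(P_tf) \le \tfrac{c}{2}\bigl(1 - e^{-2t/c}\bigr)P_t\!\left(\Gamma(f)/f\right).
\]

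To finish I would integrate both sides against $\nu$ and let $t\to\infty$. By invariance $\int P_th\,d\nu = \int h\,d\nu$, and by ergodicity $P_tf \to \int f\,d\nu$, so the left-hand side tends to $\mathrm{Ent}_\nu(f)$ and the right-hand side tends to $\tfrac{c}{2}\int \Gamma(f)/f\,d\nu$. Applying the resulting inequality $\mathrm{Ent}_\nu(f)\le \tfrac{c}{2}\int\Gamma(f)/f\,d\nu$ to $f = g^2$, and observing that $\Gamma(g^2)/g^2 = 4|\nabla g|^2$, gives exactly the desired bound $\mathrm{Ent}_\nu(g^2)\le 2c\int|\nabla g|^2\,d\nu$.

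The main obstacle is analytic rather than algebraic: Bochner's identity and the curvature bound are one-line computations, but the commutation estimate and the limit $t\to\infty$ require careful justification of differentiation under the integral, sufficient regularity and decay of $P_tf$ and $\nabla P_tf$ to legitimize integration by parts, and an approximation argument to cover general smooth $g$ for which $g^2$ need not be bounded away from $0$ or $\infty$. These are handled in the standard way by first proving the inequality on a dense subclass of well-behaved functions and then extending by monotone/dominated convergence; ergodicity of $(P_t)$ in this setting is itself a consequence of the strong convexity hypothesis, which forces $L$ to have a spectral gap.
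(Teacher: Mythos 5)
The paper does not prove this theorem; it is a quoted result, and the text directs the reader to Guionnet's lecture notes (\cite{Gu09}, p.~55) for a proof. Your proposal is therefore not in competition with a proof in the paper but with the standard reference, and what you have written is exactly that standard Bakry--\'Emery semigroup argument. The chain of ideas is correct: reversibility of $L=\Delta-\nabla V\cdot\nabla$ for $\nu$, Bochner's identity giving $\Gamma_2(f)=\|\mathrm{Hess}\,f\|_{HS}^2+\mathrm{Hess}(V)(\nabla f,\nabla f)$, the curvature bound $\Gamma_2\ge\frac{1}{c}\Gamma$, the interpolation functional $\psi(s)=P_s\bigl((P_{t-s}f)\log P_{t-s}f\bigr)$ with $\psi'(s)=P_s\bigl(\Gamma(P_{t-s}f)/P_{t-s}f\bigr)$, the commutation estimate, integration in $t$, and the substitution $f=g^2$ with $\Gamma(g^2)/g^2=4|\nabla g|^2$ yielding the constant $2c$.

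One point worth flagging in the step you call ``a second $\Gamma_2$-calculus argument'': differentiating $s\mapsto P_s\bigl(\Gamma(P_{\tau-s}f)/P_{\tau-s}f\bigr)$ and getting the lower bound $\ge\frac{2}{c}\cdot(\text{same})$ requires more than just $\Gamma_2\ge\frac{1}{c}\Gamma$. The derivative produces the expression $\frac{2\Gamma_2(u)}{u}-\frac{2\Gamma(\Gamma(u),u)}{u^2}+\frac{2\Gamma(u)^2}{u^3}$ (with $u=P_{\tau-s}f$), and one must also use the full Bochner form of $\Gamma_2$ together with a Cauchy--Schwarz/AM--GM step, namely $\big|\mathrm{Hess}(u)(\nabla u,\nabla u)\big|\le\|\mathrm{Hess}(u)\|_{HS}\,|\nabla u|^2$, to absorb the cross term and leave only the curvature contribution. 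This is standard but is genuinely more than ``invoking $\Gamma_2\ge\frac{1}{c}\Gamma$,'' so your phrasing slightly understates the work. You correctly identify the remaining technical burdens (justifying differentiation under $P_s$, ergodicity, and approximation of general $g$), and your remark that ergodicity follows from the strong convexity of $V$ is accurate. With the Cauchy--Schwarz refinement made explicit, the argument is complete.
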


We remark that the above theorem was stated by Bakry and \'Emery in \cite{BE85} in a slightly different context from what is given here; Theorem \ref{thm:bakem} was stated in the above form by Ledoux; for a proof of Theorem \ref{thm:bakem}, see \cite{Gu09}, p.55.\\

Using Theorem \ref{thm:bakem}, we show below that $\mu*\gamma_\delta$ satisfies a LSI for sufficiently {\it large} $\delta$.

\begin{theorem}
Let $\mu$ be a probability measure on $\mathbb{R}^n$ whose support is contained in a ball of radius $R$. Then $\mu*\gamma_\delta$ satisfies a LSI for all
$\delta>2R^2n$.
\end{theorem}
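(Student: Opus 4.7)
The plan is to apply the Bakry--\'Emery criterion (Theorem~\ref{thm:bakem}) directly to the density $p$ of $\mu*\gamma_\delta$, which is smooth and strictly positive because $\mu$ is compactly supported and $\gamma_\delta$ is smooth and positive. What is needed is a uniform lower bound $\mathrm{Hess}(-\log p)(x)\succeq\tfrac{1}{c}I_n$ for some $c>0$.

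The first step is to compute the Hessian explicitly. Writing $p(x)=\int\gamma_\delta(x-y)\,d\mu(y)$ and differentiating twice under the integral (routinely justified by compact support and Gaussian smoothness, as in the proof of Lemma~\ref{lem:densityasymptotic}) yields, after a short calculation, the identity
$$
\mathrm{Hess}(-\log p)(x)=\frac{1}{\delta}I_n-\frac{1}{\delta^2}\,C(x),
$$
where $C(x)=\mathrm{Cov}_{\mu_x}(Y)$ is the covariance matrix of the coordinate function $Y$ under the ``tilted'' probability measure $\mu_x$ on $\mathrm{supp}(\mu)$ whose density with respect to $\mu$ is $\gamma_\delta(x-\cdot)/p(x)$. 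Intuitively, the first term records the log-concavity contributed by the Gaussian factor, while the second records how much of that log-concavity is destroyed by convolving with $\mu$; this identity is the heart of the argument.

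The second step is to bound $\|C(x)\|_{\mathrm{op}}$ uniformly in $x$. Since LSI is translation invariant, I may assume the ball containing $\mathrm{supp}(\mu)$ is centered at the origin, so $|Y|\le R$ almost surely under $\mu_x$. Then each diagonal entry satisfies $\mathrm{Var}_{\mu_x}(Y_i)\le R^2$ and by Cauchy--Schwarz each off-diagonal entry satisfies $|C_{ij}(x)|\le R^2$, so $\|C(x)\|_{\mathrm{op}}\le nR^2$. Inserting into the Hessian identity gives, for every $\delta>2R^2n$,
$$
\mathrm{Hess}(-\log p)(x)\succeq\left(\frac{1}{\delta}-\frac{nR^2}{\delta^2}\right)I_n\succeq\frac{1}{2\delta}I_n,
$$
so Theorem~\ref{thm:bakem} yields a LSI with constant $c=2\delta$.

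The only non-routine step is the derivation of the Hessian identity above; the bound on $\|C(x)\|_{\mathrm{op}}$ and the final inequality are elementary. (In fact one can sharpen the covariance bound to $\|C(x)\|_{\mathrm{op}}\le \mathrm{tr}\,C(x)\le\mathbb{E}_{\mu_x}|Y|^2\le R^2$, which would improve the sufficient condition to $\delta>R^2$; but the crude bound already suffices for the stated theorem.)
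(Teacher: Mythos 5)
Your proof is correct and follows the same overall strategy as the paper: apply the Bakry--\'Emery criterion (Theorem~\ref{thm:bakem}) to the density of $\mu*\gamma_\delta$ after computing $\mathrm{Hess}(-\log p)$ explicitly by differentiating under the integral. Where you differ is in how the Hessian is organized and estimated. The paper writes $\delta\cdot\mathrm{Hess}(-\log p)=I_n+A(\delta)$ and bounds each entry of $A(\delta)$ in absolute value by $2R^2/\delta$ (via the triangle inequality on $|\mathbb{E} Y_iY_j|+|\mathbb{E} Y_i||\mathbb{E} Y_j|$, each term $\leq R^2$), then applies Cauchy--Schwarz to the quadratic form $\langle A(\delta)\mathbf{v},\mathbf{v}\rangle$ to land exactly on the threshold $\delta>2R^2n$. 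Your observation that the correction is precisely $-\frac{1}{\delta^2}\mathrm{Cov}_{\mu_x}(Y)$, the covariance matrix under the tilted probability measure $\mu_x$, is implicit in the paper's entrywise formulas but not named as such, and naming it buys you two things: Cauchy--Schwarz applied directly to the covariance gives $|C_{ij}|\leq R^2$, saving a factor of $2$ over the paper's entrywise bound; and, more significantly, the covariance matrix is positive semidefinite, so $\|C(x)\|_{\mathrm{op}}\leq\mathrm{tr}\,C(x)\leq\mathbb{E}_{\mu_x}|Y|^2\leq R^2$, which, as you note, would sharpen the sufficient condition to $\delta>R^2$ with no dependence on $n$ at all. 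The paper's $A(\delta)$ bookkeeping obscures this PSD structure. So the framework is identical, but your covariance reinterpretation is a genuine and worthwhile tightening of the estimate.
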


\begin{proof}
Suppose $\delta>2R^2n$. Since satisfaction of a LSI is clearly unchanged by translation, we may suppose that the ball containing $\mathrm{supp}(\mu)$ is centered at 0. Then
$\mu*\gamma_\delta$ has smooth, strictly positive density $p$ given by
$$
p(x)=\int (2\pi\delta)^{-n/2}\exp\left(\frac{-(x-y)^2}{2\delta}\right)d\mu(y)=\int d\nu_x(y),
$$
where $d\nu_x(y)=(2\pi\delta)^{-n/2}\exp\left(\frac{-(x-y)^2}{2\delta}\right)d\mu(y)$. It is then straightforward to compute that, for $i\neq j$,
\begin{align*}
&\partial_i p(x)=-\frac{1}{\delta}\left(x_i\int d\nu_x(y)-\int y_i\hspace{1mm}d\nu_x(y)\right),\\
&\partial_{ii} p(x)=-\frac{1}{\delta^2}\left(\delta\int d\nu_x(y)-x_i^2\int d\nu_x(y)+2x_i\int y_i\hspace{1mm}d\nu_x(y)
-\int y_i^2\hspace{1mm}d\nu_x(y)\right),\hspace{5mm}\mbox{and}\\
&\partial_{ij} p(x)=\frac{1}{\delta^2}\left(x_i x_j\int d\nu_x(y)-x_i\int y_j\hspace{1mm}d\nu_x(y)-x_j\int y_i\hspace{1mm}d\nu_x(y)
+\int y_i y_j\hspace{1mm}d\nu_x(y)\right);
\end{align*}
differentiation under the integral is justified by the Dominated Convergence Theorem since the integrands are smooth and have bounded partial derivatives
of all orders.

We now show $\delta\cdot\mathrm{Hess}(-\log p)$ converges uniformly to the $n\times n$ identity matrix as $\delta\rightarrow\infty$. For $i\neq j$,
\begin{align*}
(\partial_i p\cdot\partial_j p-p\cdot\partial_{ij} p)(x)
=&\frac{1}{\delta^2}\left(x_i\int d\nu_x(y)-\int y_i\hspace{1mm}d\nu_x(y)\right)\left(x_j\int d\nu_x(y)-\int y_j\hspace{1mm}d\nu_x(y)\right)\\
-&\frac{1}{\delta^2}\int d\nu_x(y) \left(x_i x_j\int d\nu_x(y)-x_i\int y_j\hspace{1mm}d\nu_x(y)-x_j\int y_i\hspace{1mm}d\nu_x(y)
+\int y_i y_j\hspace{1mm}d\nu_x(y)\right)\\
=&\frac{1}{\delta^2}\left(\int y_i\hspace{1mm}d\nu_x(y)\int y_j\hspace{1mm}d\nu_x(y)-\int y_i y_j\hspace{1mm}d\nu_x(y)\right),
\end{align*}
so
\begin{align*}
\partial_{ij}(-\log p(x))&=\frac{\partial_i p(x)\partial_j p(x)-p(x)\partial_{ij} p(x)}{p(x)^2}\\
&=\frac{\int y_i\hspace{1mm}d\nu_x(y)\int y_j\hspace{1mm}d\nu_x(y)-\int y_i y_j\hspace{1mm}d\nu_x(y)}{\delta^2\left(\int d\nu_x(y)\right)^2}.
\end{align*}
Thus
\begin{align*}
\left|\delta\cdot\partial_{ij}(-\log p(x))\right|&\leq\frac{\int |y_i|\hspace{1mm}d\nu_x(y)\int |y_j|\hspace{1mm}d\nu_x(y)+\int |y_i| |y_j|\hspace{1mm}d\nu_x(y)}
{\delta\left(\int d\nu_x(y)\right)^2}\\
&\leq\frac{R^2\left(\int d\nu_x(y)\right)^2+R^2\left(\int d\nu_x(y)\right)^2}{\delta\left(\int d\nu_x(y)\right)^2}\\
&=\frac{2R^2}{\delta}.
\end{align*}
We also compute
\begin{align*}
\partial_{ii}(-\log p(x))&=\frac{(\partial_i p(x))^2-p(x)\partial_{ii} p(x)}{p(x)^2}\\
&=\frac{\left(\int y_i\hspace{1mm}d\nu_x(y)\right)^2+\delta\left(\int d\nu_x(y)\right)^2-\int d\nu_x(y)\int y_i^2\hspace{1mm}d\nu_x(y)}
{\delta^2\left(\int d\nu_x(y)\right)^2},
\end{align*}
so
\begin{align*}
\left|\delta\cdot\partial_{ii}(-\log p(x))-1\right|
&=\left|\frac{\left(\int y_i\hspace{1mm}d\nu_x(y)\right)^2-\int d\nu_x(y)\int y_i^2\hspace{1mm}d\nu_x(y)}{\delta\left(\int d\nu_x(y)\right)^2}\right|\\
&\leq\frac{\left(\int |y_i|\hspace{1mm}d\nu_x(y)\right)^2+\int d\nu_x(y)\int y_i^2\hspace{1mm}d\nu_x(y)}{\delta\left(\int d\nu_x(y)\right)^2}\\
&\leq\frac{R^2\left(\int d\nu_x(y)\right)^2+R^2\left(\int d\nu_x(y)\right)^2}{\delta\left(\int d\nu_x(y)\right)^2}\\
&=\frac{2R^2}{\delta}.
\end{align*}

So $\delta\cdot\mathrm{Hess}(-\log p)=I_n+A(\delta)$, where $A(\delta)$ is an $n\times n$ real symmetric matrix whose entries are all uniformly bounded in
absolute value by $2R^2/\delta$. We therefore have for all ${\bf v}\in\mathbb{R}^n$, $c\in\mathbb{R}$,
\begin{align*}
\langle(\mathrm{Hess}(-\log p)-\frac{1}{c}I_n){\bf v},{\bf v}\rangle
&=\langle\frac{1}{\delta}(I_n+A(\delta)){\bf v},{\bf v}\rangle-\frac{1}{c}||{\bf v}||^2\\
&=\left(\frac{1}{\delta}-\frac{1}{c}\right)||{\bf v}||^2+\frac{1}{\delta}\langle A(\delta){\bf v},{\bf v}\rangle\\
&\geq \left(\frac{1}{\delta}-\frac{1}{c}\right)||{\bf v}||^2-\frac{1}{\delta}|\langle A(\delta){\bf v},{\bf v}\rangle|
\end{align*}
But by Cauchy-Schwarz, we have
\begin{align*}
|\langle A(\delta){\bf v},{\bf v}\rangle|^2
&=|\sum_{i,j}A_{ij}v_iv_j|^2\\
&\leq\sum_{i,j}|A_{ij}|^2\cdot\sum_{i,j}|v_iv_j|^2\\
&\leq\sum_{i,j}\left(\frac{2R^2}{\delta}\right)^2\cdot\sum_{i}|v_i|^2\cdot\sum_{j}|v_j|^2\\
&=n^2\left(\frac{2R^2}{\delta}\right)^2||{\bf v}||^2\cdot||{\bf v}||^2\\
&=\left(\frac{2R^2n}{\delta}||{\bf v}||^2\right)^2,
\end{align*}
so for sufficiently large $c$,
\begin{align*}
\langle(\mathrm{Hess}(-\log p)-\frac{1}{c}I_n){\bf v},{\bf v}\rangle
&\geq  \left(\frac{1}{\delta}-\frac{1}{c}\right)||{\bf v}||^2-\frac{1}{\delta}|\langle A(\delta){\bf v},{\bf v}\rangle|\\
&\geq  \left(\frac{1}{\delta}-\frac{1}{c}\right)||{\bf v}||^2-\frac{1}{\delta}\cdot\frac{2R^2n}{\delta}||{\bf v}||^2\\
&= \frac{1}{\delta^2}\left(\delta-2R^2n-\frac{\delta}{c}\right)||{\bf v}||^2\\
&\geq 0
\end{align*}
since $\delta>2R^2n$. So by Theorem \ref{thm:bakem}, $\mu*\gamma_\delta$ satisfies a LSI.
\end{proof}

We are ultimately interested in showing that  $\mu*\gamma_\delta$ satisfies a LSI for all $\delta>0$, not just for large $\delta$. This will require a different approach
than the one used in this paper, as there is no known analogue of Theorem \ref{thm:bg} for higher dimensions, and is a topic currently being explored by the author.

\section{Acknowledgements}
The author would like to thank his Ph.D. advisor, Todd Kemp, both for posing this problem and for his valuable insight and discussions regarding this problem.


\begin{thebibliography}{99}



\bibitem{Ba94}
Bakry, D.: {\it L'hypercontractivit\'e et son utilisation en thorie des semigroupes.} Lectures on probability
theory (Saint-Flour, 1992), 1--114, Lecture Notes in Math., 1581, Springer, Berlin, 1994.

\bibitem{Ba97}
Bakry, D.: {\it On Sobolev and logarithmic Sobolev inequalities for Markov semigroups.} New trends in
stochastic analysis (Charingworth, 1994), 43--75, World Sci. Publ., River Edge, NJ, 1997.

\bibitem{BE85}
Bakry, D.; \'Emery, M.: {\it Diffusions hypercontractive.} S\'eminaire de Probabilit\'e XIX. Lecture Notes
in Math. {\bf 1123}, 179--206 (1985)

\bibitem{BL06}
Bakry, D.; Ledoux, M.: {\it A logarithmic Sobolev form of the Li-Yau parabolic inequality.} Rev. Mat.
Iberoam. {\bf 22} no. 2, 683--702 (2006)

\bibitem{BG99}
  Bobkov, S. G.; G\"{o}tze, F.:
  {\it  Exponential Integrability and Transportation Cost Related to Logarithmic Sobolev Inequalities,}
    J. Funct. Anal.  {\bf 163}  (1999),  1--28.

\bibitem{BH97}
Bobkov, S.; Houdr\'e, C.: {\it Some connections between isoperimetric and Sobolev-type inequalities.}
Mem. Amer. Math. Soc. {\bf 129} no. 616 (1997)

\bibitem{BL00}
Bobkov, S.; Ledoux, M.: {\it From Brunn-Minkowski to Brascamp-Lieb and to logarithmic Sobolev
inequalities.} Geom. Funct. Anal. {\bf 10}, no. 5, 1028--1052 (2000)

\bibitem{BT06}
Bobkov, S.; Tetali, P.: {\it Modiﬁed logarithmic Sobolev inequalities in discrete settings.} J. Theoret.
Probab. {\bf 19}, no. 2, 289--336 (2006)

\bibitem{Da87}
Davies, E. B.: {\it Explicit constants for Gaussian upper bounds on heat kernels.} Amer. J. Math. {\bf 109},
no. 2, 319--333 (1987)

\bibitem{Da90}
Davies, E. B.: {\it Heat kernels and spectral theory.} Cambridge Tracts in Mathematics, 92. Cambridge
University Press, 1990.

\bibitem{DS84}
Davies, E. B., Simon, B.: {\it Ultracontractivity and the heat kernel for Schr\"odinger operators and Dirichlet
Laplacians.} J. Funct. Anal. {\bf 59}, 335--395 (1984)

\bibitem{DS96}
Diaconis, P., Saloff-Coste, L.: {\it Logarithmic Sobolev inequalities for finite Markov chains.} Ann. Appl.
Probab. {\bf 6}, 695--750 (1996)

\bibitem{Gr75}
Gross, L.: {\it Logarithmic Sobolev inequalities.} Amer. J. Math. {\bf 97}, 1061--1083 (1975)

\bibitem{Gu09}
Guionnet, A.: {\it Large Random Matrices: Lectures on Macroscopic Asymptotics.} \'Ecole d'\'Et\'e de Probabilit\'es
de Saint-Flour XXXVI, Lecture Notes in Math., 1957, Springer, Berlin, 2009

\bibitem{GR98}
Gross, L.; Rothaus, O.: {\it Herbst inequalities for supercontractive semigroups.} J. Math. Kyoto Univ.
{\bf 38}, no. 2, 295--318 (1998)

\bibitem{GZ03}
Guionnet, A.; Zegarlinski, B.: {\it Lectures on logarithmic Sobolev inequalities.} S\'eminaire de Probabilit\'es,
XXXVI, 1–134, Lecture Notes in Math., 1801, Springer, Berlin, 2003.

\bibitem{HW53}
Hoffman, A; Wielandt, H.: {\it The variation of the spectrum of a normal matrix.} Duke Math. J.
{\bf 20}, no. 1, 37--39 (1953)

\bibitem{HS87}
Holley, R., Stroock, D.: {\it Logarithmic Sobolev inequalities and stochastic Ising models.} J. Statist.
Phys. {\bf 46}, 1159--1194 (1987)

\bibitem{Le96}
Ledoux, M.: {\it Isoperimetry and Gaussian analysis. Lectures on probability theory and statistics.} 165--294,
Lecture Notes in Math 1648, Springer, Berlin, 1996

\bibitem{Le01}
Ledoux, M.: {\it The concentration of measure phenomenon.} Mathematical Surveys and Monographs, 89.
American Mathematical Society, Providence, RI, 2001.

\bibitem{Le03}
Ledoux, M.: {\it A remark on hypercontractivity and tail inequalities for the largest eigenvalues of
random matrices.} S\'eminaire de Probabilit\'es XXXVII, 360--369, Lecture Notes in Math., 1832, Springer, Berlin, 2003.

\bibitem{Vi03}
Villani, C.: {\it Topics in optimal transportation.} Graduate Studies in Mathematics, 58. American
Mathematical Society, Providence, RI, 2003.

\bibitem{Wi55}
Wigner, E.: {\it Characteristic Vectors of Bordered Matrices with Inﬁnite Dimensions.} Ann. of Math.
{\bf 62}, 548--564 (1955)

\bibitem{Wi58}
Wigner, E.: {\it On the Distribution of the Roots of Certain Symmetric Matrices.} Ann. of Math. {\bf 67},
325--328 (1958)

\bibitem{Ya96}
Yau, H.T.: {\it Logarithmic Sobolev inequality for the lattice gases with mixing conditions.} Commun.
Math. Phys. {\bf 181}, 367--408 (1996)

\bibitem{Ya97}
Yau, H.T.: {\it Log-Sobolev inequality for generalized simple exclusion processes.} Probab. Theory
Related Fields {\bf 109}, 507--538 (1997)

\bibitem{Ze92}
Zegarlinski, B.: {\it Dobrushin uniqueness theorem and logarithmic Sobolev inequalities.} J. Funct. Anal.
{\bf 105}, 77--111 (1992)

\end{thebibliography}
\end{document}